\tikzset{
    punkt/.style={
           circle,
           draw=black, very thick,
           text centered,
           inner sep = 2.5pt},
    dot/.style={
           circle,
           fill=black,
           inner sep=3pt
           },
    pil/.style={
           -,
           thick,
           },
     dir/.style={
           <-,
           thick,
           shorten <=2pt,
           shorten >=2pt,
           }
}
\declaretheorem[name=Theorem, numberwithin=section]{theorem}
\declaretheorem[name=Lemma, sibling=theorem]{lemma}
\declaretheorem[name=Definition, sibling=theorem]{definition}
\declaretheorem[name=Corollary, sibling=theorem]{corollary}
\declaretheorem[name=Conjecture, sibling=theorem]{conjecture}
\declaretheorem[name=Observation, sibling=theorem]{observation}
\declaretheorem[name=Question, style = remark, sibling=theorem]{question}
\renewcommand{\leq}{\leqslant}
\renewcommand{\geq}{\geqslant}
\def\cqedsymbol{\ifmmode$\lrcorner$\else{\unskip\nobreak\hfil
\penalty50\hskip1em\null\nobreak\hfil$\lrcorner$
\parfillskip=0pt\finalhyphendemerits=0\endgraf}\fi}
\title{On Vizing's edge colouring question\thanks{M.B., A.L. and J.N. are supported by ANR project GrR (\textsc{ANR-18-CE40-0032}). T. K. is supported by the grant no.~19-04113Y of the Czech Science Foundation (GA\v{C}R) and the Center for Foundations of Modern Computer Science (Charles Univ. project UNCE/SCI/004), her visit to Bordeaux, where part of the research was conducted, was supported by Czech-French Mobility project 8J19FR027.}}
\author[1]{Marthe Bonamy}
\author[2]{Oscar Defrain}
\author[3]{Tereza Klimo\v{s}ov\'{a}}
\author[2]{\\Aurélie Lagoutte}
\author[1]{Jonathan Narboni}
\affil[1]{CNRS, LaBRI, Université de Bordeaux, France.}
\affil[2]{Université Clermont Auvergne, CNRS, LIMOS UMR 6158, Clermont–Ferrand, France.}
\affil[3]{KAM, Charles University, Czech Republic.}
\begin{document}

\maketitle

\begin{abstract}
Soon after his 1964 seminal paper on edge colouring, Vizing asked the following question: can an optimal edge colouring be reached from any given proper edge colouring through a series of Kempe changes? We answer this question in the affirmative for triangle-free graphs.
\end{abstract}

\section{Introduction}\label{sec:intro}

Vizing proved in 1964~\cite{vizing1964estimate} that, to colour properly the edges of a simple graph, it suffices to have one more colour than the maximum number of neighbours.

\begin{theorem}[\cite{vizing1964estimate}]\label{th:Vizing}
Any simple graph $G$ satisfies $\chi'(G) \leq \Delta(G)+1$.
\end{theorem}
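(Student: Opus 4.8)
The plan is to prove the bound by induction on $|E(G)|$, the edgeless graph being trivial. For the inductive step, write $\Delta = \Delta(G)$, remove an edge $e = xy_0$, and invoke the induction hypothesis on $G - e$ to obtain a proper colouring $c$ of $G - e$ with colours $\{1,\dots,\Delta+1\}$; it then suffices to recolour $G - e$ so that afterwards $e$ can also be coloured properly. The elementary fact driving everything is that each vertex $v$ has degree at most $\Delta$, so it \emph{misses} at least one colour; let $\overline M(v)$ denote the set of colours missing at $v$. Two moves preserve properness: a \emph{Kempe change}, swapping two colours $\alpha,\beta$ along a maximal $\alpha\beta$-alternating path; and \emph{shifting a Vizing fan}. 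A \emph{fan} at $x$ is a sequence $y_0,y_1,\dots,y_s$ of distinct neighbours of $x$ such that $c(xy_i) \in \overline M(y_{i-1})$ for $1 \le i \le s$; if in addition some colour $\alpha$ lies in $\overline M(x) \cap \overline M(y_s)$, then reassigning $c(xy_{i-1}) := c(xy_i)$ for $i = s, s-1, \dots, 1$ and $c(xy_s) := \alpha$ is readily checked to be a proper colouring of all of $G$: at $x$ the fan colours are merely permuted and the missing colour $\alpha$ is added, while at each $y_i$ the edge $xy_i$ receives a colour previously missing there. Hence it is enough to produce, after a bounded number of Kempe changes, a fan whose final vertex misses a colour that is also missing at $x$.

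I would take a fan $y_0,\dots,y_s$ at $x$ that is \emph{maximal}, in the sense that it cannot be prolonged by one further vertex. If $\overline M(x) \cap \overline M(y_s)$ is nonempty we are done by the shift just described, so suppose it is empty and pick $\alpha \in \overline M(x)$, $\beta \in \overline M(y_s)$; then $\alpha \neq \beta$, colour $\beta$ appears at $x$, and colour $\alpha$ appears at $y_s$. Maximality forces the unique $\beta$-edge at $x$ to reach a fan vertex $y_j$ with $1 \le j \le s - 1$, whence $\beta \in \overline M(y_{j-1})$ by the fan condition. Now look at the maximal $\alpha\beta$-alternating paths. The one through $x$ has $x$ as an endpoint (as $\alpha$ is missing and $\beta$ present at $x$); since $y_{j-1}$ and $y_s$ both miss $\beta$, either of them lying on that path would have to be its other endpoint, so at least one of $y_{j-1}, y_s$ lies on an $\alpha\beta$-path avoiding $x$. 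Apply the Kempe change to such a path (with the convention that nothing is done if it is the trivial path at $y_{j-1}$, meaning $\alpha$ is already missing there). This makes $\alpha$ missing at $y_{j-1}$ or at $y_s$. The fan edges at $x$ never carry colour $\alpha$ and carry $\beta$ only on $xy_j$, so the prefix fan $y_0,\dots,y_{j-1}$ (in the first case) or the whole fan $y_0,\dots,y_s$ (in the second case, using that the flipped path avoided $x$ and hence the edge $xy_j$) is still a valid fan, and its last vertex now misses $\alpha \in \overline M(x)$. A final shift of that fan frees colour $\alpha$ at $x$, which I then assign to $e$.

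The two preservation checks and the maximality deduction are routine bookkeeping. The genuine difficulty, which is the heart of Vizing's original argument, is the last paragraph: one must make sure the Kempe change meant to repair the fan does not itself destroy it, which is precisely why one reasons through the path incident to $x$ and why one may have to retreat from the full fan to the truncated fan ending at $y_{j-1}$. A careful write-up should in particular verify that flipping the path from $y_s$ leaves $\beta$ missing at $y_{j-1}$ so that the full fan really does survive, and otherwise fall back to the truncated fan, whose endpoint in that branch also misses $\alpha$ after the flip; in every case the argument closes with a single fan shift.
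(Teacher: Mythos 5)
Your argument is correct: it is the classical Vizing proof by induction on the number of edges, via a maximal fan at $x$, a single $\alpha\beta$-Kempe change chosen on a chain avoiding $x$, and a final fan shift, and you correctly identify (and resolve, by falling back to the truncated fan ending at $y_{j-1}$ when the flip from $y_s$ disturbs $\overline M(y_{j-1})$) the one genuinely delicate point. The paper does not prove this statement — it is quoted from~\cite{vizing1964estimate} — but your approach is exactly the fan-plus-Kempe-chain argument that the paper's ``fan-like tools'' in Section~\ref{sec:fans} are built upon.
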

Here, $\chi'(G)$ denotes the chromatic index of $G$, that is, the smallest integer $k$ such that $G$ admits a proper $k$-edge-colouring. The largest degree of a vertex in $G$ is denoted by $\Delta(G)$, where the \emph{degree} of a vertex is the number of neighbours it has.

The proof of Theorem~\ref{th:Vizing} relies heavily on the notion of Kempe change, introduced in 1879 by Kempe\footnote{Initially in the context of vertex colouring.} in a failed attempt to prove the Four Colour Theorem~\cite{kempe1879geographical}. Given an edge-coloured graph, a \emph{Kempe chain} is a maximal connected bicoloured subgraph. A \emph{Kempe change} corresponds to selecting a Kempe chain and swapping the two colours in it. Observe that a Kempe chain may consist in a single edge $e$, coloured say $\alpha$, when some colour $\beta$ does not appears on any edge incident to $e$. A Kempe change on this Kempe chain therefore precisely recolour $e$ into $\beta$, possibly decreasing the total number of colours.

\begin{theorem}[\cite{vizing1964estimate}]\label{th:Vizingbis}
For every simple graph $G$, for any integer $k> \Delta(G)+1$, for any $k$-edge-colouring $\alpha$, there is a $(\Delta(G)+1)$-edge-colouring that can be reached from $\alpha$ through a series of Kempe changes.
\end{theorem}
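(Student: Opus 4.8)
The plan is to follow Vizing's own proof of Theorem~\ref{th:Vizing} and to check that every recolouring it performs either \emph{is} a Kempe change or can be simulated by a short sequence of Kempe changes. Since a Kempe change never introduces a colour not already present and always keeps the colouring proper, it suffices to establish the following reduction step and then iterate it: \emph{from any proper $k$-edge-colouring with $k\ge\Delta(G)+2$ one can reach, through Kempe changes, a proper edge-colouring using at most $k-1$ colours.} Repeating this as long as more than $\Delta(G)+1$ colours are used brings us down to a $(\Delta(G)+1)$-edge-colouring.

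To perform one reduction step, fix the colour to be eliminated, say colour $k$, and recall that its edges form a matching $M$. I would recolour the edges of $M$ one at a time, using throughout only two kinds of moves: (i) swapping a Kempe chain whose two colours both lie in $\{1,\dots,k-1\}$; and (ii) recolouring a single edge of $M$ from $k$ to some colour of $\{1,\dots,k-1\}$ absent at both of its endpoints (a one-edge Kempe chain). Moves of type (i) never touch a $k$-coloured edge, and moves of type (ii) strictly decrease $|M|$; hence no new colour is ever created and after finitely many successful recolourings colour $k$ is gone.

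The heart of the argument is to recolour one edge $e=uv$ of $M$. As $M$ is a matching, every edge incident with $e$ is coloured in $\{1,\dots,k-1\}$, and since $\deg(u),\deg(v)\le\Delta(G)$ the set $A$ of colours of $\{1,\dots,k-1\}$ missing at $u$ and the set $B$ of those missing at $v$ satisfy $|A|,|B|\ge(k-1)-(\Delta(G)-1)=k-\Delta(G)\ge 2$. If $A\cap B\ne\emptyset$, recolour $e$ with a common missing colour and stop. Otherwise build a maximal Vizing fan $uv_0,uv_1,\dots,uv_s$ at $u$, with $v_0=v$, the colour $c_{i+1}$ of $uv_{i+1}$ missing at $v_i$, and the colours $c_1,\dots,c_s$ pairwise distinct, and let $\gamma\in\{1,\dots,k-1\}$ be a colour missing at $v_s$. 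The key point is that such a fan can be \emph{rotated by one-edge Kempe changes}: if $\gamma$ is also missing at $u$, recolour $uv_s$ with $\gamma$ (this frees $c_s$ at $u$), then recolour $uv_{s-1}$ with $c_s$ (legal, as $c_s$ is missing at $v_{s-1}$ and now at $u$), which frees $c_{s-1}$ at $u$, and so on down to recolouring $e=uv_0$ with $c_1$ — each step a one-edge Kempe change of type (i)/(ii), the last one removing colour $k$ from $e$. If $\gamma$ is not missing at $u$, then by maximality $\gamma=c_j$ for some $1\le j<s$, and here one argues exactly as in Vizing's proof: examine the $\{\beta,\gamma\}$-Kempe chains (where $\beta\in A$) through the three vertices $v_{j-1}$, $v_s$ and $u$; since a chain is a path with only two ends, at least two of these vertices lie in distinct chains, so swapping a well-chosen one — a type-(i) Kempe change, as $\beta,\gamma\in\{1,\dots,k-1\}$ — either makes $\gamma$ missing at $u$ (reducing to the good case for a sub-fan) or directly produces a colour missing at both $u$ and $v$.

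The main obstacle, as usual with Vizing's theorem, is precisely this last ``bad'' case: choosing which $\{\beta,\gamma\}$-chain to swap so that the subsequent rotation is certain to liberate colour $k$ from $e$, and verifying this in every sub-configuration — that is, reproducing the fan-recolouring argument while respecting the discipline of moves (i)--(ii). A secondary, purely bookkeeping issue is to pin down the fan carefully enough (distinct colours $c_i$, genuine maximality) that the rotation never creates a conflict at $u$. The slack we enjoy here, namely $|A|,|B|\ge 2$ rather than merely $\ge 1$, should make this case analysis more comfortable than in the original proof of Theorem~\ref{th:Vizing}, not less.
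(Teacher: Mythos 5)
The paper never proves Theorem~\ref{th:Vizingbis}; it is quoted from Vizing's 1964 paper, so the only meaningful benchmark is Vizing's own argument --- and your proposal is exactly that argument, correctly adapted to the reconfiguration setting. The overall reduction (eliminate the top colour class edge by edge), the count $|A|,|B|\ge k-\Delta\ge 2$, and the key observation that every move in the fan proof is a Kempe change (either a $(\beta,\gamma)$-chain swap with $\beta,\gamma\le k-1$, or a single-edge chain, i.e.\ recolouring an edge with a colour missing at both endpoints) are all correct, and the fan rotation is indeed a cascade of single-edge Kempe changes performed from the tip downwards.

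The one place to tighten is your description of the bad case, which as written is slightly off: swapping a $(\beta,\gamma)$-chain that avoids $u$ changes nothing at $u$, so it cannot ``make $\gamma$ missing at $u$''. What the swap actually buys is that $\beta$ becomes missing at whichever of $v_{j-1}$, $v_s$ lies in a chain not containing $u$ (such a vertex exists since all three of $u$, $v_{j-1}$, $v_s$ are endpoints of their $(\beta,\gamma)$-chains and a path has only two ends); you then rotate the fan up to that vertex and colour its edge $\beta$, finishing as in the good case. To see that this swap does not destroy the fan, note that no $c_i$ equals $\beta$ (each $c_i$ appears at $u$ while $\beta$ is missing there) and $c_i=\gamma$ only for $i=j$; moreover in the subcase where the full fan is rotated, the swapped chain misses $v_{j-1}$ because $v_{j-1}$'s own chain contains $u$. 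With that correction the case analysis closes and the proof is complete.
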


Note that while some graphs need $\Delta(G)+1$ colours, some graphs can be edge-coloured with only $\Delta(G)$ colours. In the follow-up paper extending the result to multigraphs~\cite{vizing1965chromatic}, and later in a more publicly available survey paper~\cite{vizing1968some}, Vizing asks whether an optimal colouring can always be reached through a series of Kempe changes, as follows.

\begin{question}[\cite{vizing1965chromatic}]\label{qu:vizing}
For every simple graph $G$, for any integer $k> \chi'(G)$, for any $k$-edge-colouring $\alpha$, is there a $\chi'(G)$-edge-colouring that can be reached from $\alpha$ through a series of Kempe changes?
\end{question}

Question~\ref{qu:vizing} is in fact stated in the more general context of multigraphs.

Note that neither Theorem~\ref{th:Vizingbis} nor Question~\ref{qu:vizing} implies that all colourings with fewer colours are reachable, i.e., there is no choice regarding the target colouring. 
We say two $k$-edge-colourings are \emph{Kempe-equivalent} if one can be reached from the other through a series of Kempe changes using colours from $\{1,\ldots,k\}$. 
Question~\ref{qu:vizing}, if true, would imply~\cite{casselgren} and the following conjecture of Mohar~\cite{mohar2006kempe}, using the target $\chi'(G)$-colouring as an intermediate colouring.

\begin{conjecture}[\cite{mohar2006kempe}]\label{conj:Mohar}
For every simple graph $G$, all $(\Delta(G)+2)$-edge-colourings are Kempe-equivalent.
\end{conjecture}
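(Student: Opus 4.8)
The plan is to prove the stronger statement that, for a fixed simple graph $G$ with maximum degree $\Delta=\Delta(G)$ and $k=\Delta+2$ colours, every proper $k$-edge-colouring is Kempe-equivalent to a single fixed reference colouring $\tau$; since Kempe-equivalence is an equivalence relation, connecting every colouring to a common $\tau$ immediately yields that all $k$-colourings lie in one class. Such a $\tau$ exists: by Theorem~\ref{th:Vizing} we have $\chi'(G)\le\Delta+1<k$, so $G$ admits a proper $k$-edge-colouring. The whole difficulty is thus concentrated in a reachability statement — from an arbitrary proper $k$-colouring $\alpha$, reach $\tau$ by a series of Kempe changes — and this I would establish for every simple graph at once, with no triangle-freeness assumption.

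To prove reachability I would fix an ordering $e_1,\dots,e_m$ of the edges and maintain the invariant that, after stage $i$, the current colouring agrees with $\tau$ on the set $F_i=\{e_1,\dots,e_i\}$. The engine of the induction is the slack coming from the extra colour: since every vertex $u$ satisfies $d(u)\le\Delta=k-2$, at least two colours are \emph{missing} at $u$ in any proper $k$-colouring. In the inductive step, with $e_i=uv$ and target colour $c=\tau(e_i)$, I would use a Vizing-type fan rooted at $u$ (and symmetrically at $v$), rotating colours along edges not yet in $F_{i-1}$ to bring a missing colour into position, and then perform a single Kempe swap to install $c$ on $e_i$. The two guaranteed missing colours at each endpoint are exactly the resource that makes this fan-and-chain manoeuvre available while keeping the rotations away from $F_{i-1}$.

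The main obstacle is \emph{confinement} of the Kempe chain that installs $c$. A Kempe chain is a maximal bicoloured subgraph, so the chain swapped in the final move may run far from $e_i$, cross edges of $F_{i-1}$, and destroy the agreement built up so far; this is the edge-colouring analogue of the familiar phenomenon that bicoloured components which are separate in $G$ minus a vertex merge once that vertex is reinserted. I would attack this by choosing the swapped pair of colours to contain, whenever possible, a colour missing at \emph{both} endpoints of the offending region, so that the chain terminates inside the still-unfixed part $E(G)\setminus F_{i-1}$ and cannot propagate into $F_{i-1}$; where no such common missing colour exists I would first spend a bounded number of preparatory Kempe changes, again exploiting the second free colour at a nearby vertex, to create one. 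Making this case analysis exhaustive for arbitrary simple graphs, and pairing it with a potential function — for instance the pair consisting of $|F_i|$ together with a secondary measure of how far a chain can reach into $F_{i-1}$ — that strictly decreases and so forbids cycling, is the crux. Controlling the global reach of bicoloured chains using only the local resource of two missing colours per vertex is precisely where a complete argument for every simple graph must do genuinely new work, and I expect it to be the principal difficulty.
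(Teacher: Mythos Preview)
The statement you are attempting to prove is labelled a \emph{conjecture} in the paper, and the paper does not prove it in full generality; the paper's contribution is Theorem~\ref{th:main}, which settles the triangle-free case and hence (via Corollary~\ref{cor:main}) yields the conjecture for triangle-free graphs only. There is therefore no ``paper's own proof'' of Conjecture~\ref{conj:Mohar} to compare your proposal against.

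As for the proposal itself, it is a plan rather than a proof, and you say as much: the confinement step --- ensuring that the Kempe chain installing the target colour on $e_i$ does not disturb the already-fixed set $F_{i-1}$ --- is left as ``the principal difficulty'' with no argument given. This is a genuine gap, not a technicality. Having two missing colours at every vertex is a local resource, whereas a bicoloured chain is a global object; there is no a~priori reason why a bounded number of preparatory swaps near $e_i$ should suffice to confine the final chain, and the potential-function idea is only a wish, since you have not exhibited a quantity that provably decreases under the moves you describe. This obstacle is exactly why the conjecture is still open for general simple graphs.

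It is also worth noting that the paper's approach in the triangle-free case is structurally different from yours: rather than fixing edges one at a time and protecting a growing set $F_{i-1}$, it peels off one colour class (a perfect matching) at a time, passes to a minimal colouring with respect to a two-level lexicographic measure (fewest bad edges, then fewest ugly edges), and derives a contradiction from the existence of a bad edge using the fan machinery of Section~\ref{sec:fans}. Triangle-freeness enters crucially in Lemma~\ref{lem:doubleloop}, and the paper explicitly remarks that removing that hypothesis appears to require a new idea.
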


Mohar proved the weaker case where $(\chi'(G)+2)$ colours are allowed.

\begin{theorem}[\cite{mohar2006kempe}]\label{th:mohar}
For every simple graph $G$, all $(\chi'(G)+2)$-edge-colourings are Kempe-equivalent.
\end{theorem}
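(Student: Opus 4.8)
The plan: show that every proper $(\chi'(G)+2)$-edge-colouring of $G$ is Kempe-equivalent to a single fixed colouring; since Kempe-equivalence is an equivalence relation (reversing a Kempe change is again a Kempe change), that is enough. In fact I would prove the slightly more flexible statement that for every graph $H$ and every colour set $S$ with $|S|\ge\chi'(H)+2$, all proper edge-colourings of $H$ with colours in $S$ are Kempe-equivalent. Fixing a proper $\chi'(H)$-edge-colouring $\gamma$ of $H$ that uses $\chi'(H)$ of the colours of $S$, it suffices to reach $\gamma$ from an arbitrary proper $S$-colouring $\varphi$. (By Theorem~\ref{th:Vizingbis} one may first assume $\varphi$ uses at most $\Delta(H)+1$ colours, which is convenient although not essential.)

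I would induct on $|E(H)|$ by \emph{peeling off one colour class of $\gamma$}. Let $c^\star$ be a colour used by $\gamma$ and $M=\gamma^{-1}(c^\star)$, a nonempty matching. First (Part A), transform $\varphi$ through Kempe changes into a colouring whose colour-$c^\star$ class is exactly $M$. Then (Part B) set $H'=H-M$: since $\gamma|_{H'}$ avoids $c^\star$ we have $\chi'(H')\le\chi'(H)-1$, the current colouring of $H'$ uses only colours from $S'=S\setminus\{c^\star\}$, and $|S'|=|S|-1\ge\chi'(H')+2$, so by induction (as $|E(H')|<|E(H)|$) that colouring of $H'$ is Kempe-equivalent to $\gamma|_{H'}$ through Kempe changes using colours in $S'$. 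Finally (Part C), lift this sequence to $H$: each change involves two colours different from $c^\star$, hence ignores the $M$-edges (all coloured $c^\star$), so the relevant Kempe chains are literally the same in $H$ and in $H'$; performing the lifted changes in $H$ leaves $M$ untouched and turns $H'$ into $\gamma|_{H'}$, that is, produces $\gamma$.

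So everything reduces to Part A. Its easy half: once no edge outside $M$ is coloured $c^\star$, colour $c^\star$ is missing at both ends of every $M$-edge not yet coloured $c^\star$ (two $M$-edges cannot share a vertex), so each such edge is recoloured to $c^\star$ by a one-edge Kempe change, and these changes are independent because $M$ is a matching; this makes the colour-$c^\star$ class equal to $M$. The hard half is to first bring the number of edges coloured $c^\star$ but lying outside $M$ down to zero: given such an edge $e=uv$, recolour it off $c^\star$ without increasing that number. This is where the hypothesis $|S|\ge\chi'(H)+2\ge\Delta(H)+2$ is used: at least $(\chi'(H)+2)-\deg(u)\ge2$ colours other than $c^\star$ are missing at $u$, and likewise at $v$, which is exactly the room in Vizing's argument, so $e$ can be recoloured off $c^\star$ by a short Vizing-fan rotation at $u$ followed by at most one Kempe-chain swap, each step being a Kempe change of $H$.

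The main obstacle is precisely this hard half of Part A. Although every fan rotation and every chain swap is individually a Kempe change, one must check that the local procedure keeps the colouring proper, terminates, and — the subtle point — never puts colour $c^\star$ back onto an edge outside $M$: neither during a fan rotation (a colour missing at a fan vertex could itself be $c^\star$) nor through collateral effects on the edges coloured $c^\star$ outside $M$ that are not yet treated. Handling these interactions, using the (at least two) colours unused by $\gamma$ as scratch space, is the technical heart of the argument; it is also where the full slack $\chi'(H)+2$, rather than $\chi'(H)+1$, is genuinely needed, since that is exactly what survives as $|S'|\ge\chi'(H')+2$ all the way down the recursion.
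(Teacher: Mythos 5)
The paper does not actually prove this statement (it is quoted from Mohar), but your architecture --- peel off a colour class $M$ of an optimal colouring $\gamma$, force $M$ to be exactly the class of a colour $c^\star$, delete $M$ and recurse --- is precisely the approach the paper attributes to Mohar and itself follows for Theorem~\ref{th:main}. Your Parts B and C are correct and complete: since $\gamma$ restricted to $H-M$ certifies $\chi'(H')\le\chi'(H)-1$, the slack $|S'|\ge\chi'(H')+2$ survives, and Kempe changes in $H'$ avoiding $c^\star$ lift verbatim to $H$ because the $M$-edges, all coloured $c^\star$, are invisible to them. The easy half of Part A is also fine.

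The genuine gap is in the hard half of Part A, and it is not merely a matter of bookkeeping. You assert that an ugly edge $e=uv$ (coloured $c^\star$, outside $M$) ``can be recoloured off $c^\star$ by a short Vizing-fan rotation at $u$ followed by at most one Kempe-chain swap, each step being a Kempe change of $H$.'' A fan rotation is \emph{not} a Kempe change, and whether it decomposes into Kempe changes is exactly the crux: it does when the fan sequence is a path (Observation~\ref{obs:path}) or, with one auxiliary swap, a comet (Lemma~\ref{lem:comet}), but when the fan closes into a cycle there is a real obstruction --- handling that case is the entire content of Lemmas~\ref{lem:loop} and~\ref{lem:doubleloop} and is where triangle-freeness enters the paper's main theorem. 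What your sketch is missing is the one idea that makes $\chi'+2$ colours sufficient \emph{here}: since $|S|\ge\Delta(H)+2$, every vertex misses at least two colours, hence at most one of its missing colours equals $c^\star$; so at each fan vertex $x_i$ one may always \emph{choose} to follow a missing colour different from $c^\star$. The fan returns to its starting edge $uv$ (i.e.\ becomes a cycle) only if some chosen $m(x_i)$ equals $c^\star=\varphi(uv)$, so this choice guarantees the fan is a path or a comet, and simultaneously guarantees that the inverted fan and the auxiliary swap of Lemma~\ref{lem:comet} involve no colour $c^\star$, so no new ugly edge is created and no good edge is destroyed. Without this pointwise use of the two-colour slack your procedure is not known to consist of Kempe changes at all, and your stated reason for needing $\chi'+2$ (that the slack must survive the recursion) is not where the hypothesis actually does its work. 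I would also flag that with more than $\Delta+1$ colours the missing colour is no longer unique, so the digraph $D_u$ and Observation~\ref{obs:path} must be restated for a chosen system of missing colours before they can be invoked.
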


As noted in~\cite{mcdonald2012kempe}, Theorem~\ref{th:mohar} is not true when replacing $(\chi'(G)+2)$ with $\chi'(G)$, regardless of whether $\chi'(G)=\Delta(G)$ (consider the graph $K_{5,5}$) or $\chi'(G)=\Delta(G)+1$ (consider the graph $K_5$). As noted in~\cite{mohar2006kempe}, it could however be true with $(\chi'(G)+1)$.

Not much is known towards Question~\ref{qu:vizing} or Conjecture~\ref{conj:Mohar}. In 2012, McDonald, Mohar and Scheide~\cite{mcdonald2012kempe} proved the case $\Delta(G)=3$ of the former (hence the case $\Delta(G)=4$ of the latter). In 2016, Asratian and Casselgren~\cite{casselgren} proved the case $\Delta(G)=4$ of the former (hence the case $\Delta(G)=5$ of the latter). We answer both questions affirmatively in the case where the graph is triangle-free, regardless of the value of $\Delta(G)$.

\begin{theorem}\label{th:main}
For every triangle-free graph $G$, for any integer $k>\chi'(G)$, any given $\chi'(G)$-edge-colouring can be reached from any $k$-edge-colouring through a series of Kempe changes.
\end{theorem}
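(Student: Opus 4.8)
The plan is to reduce the problem to a statement about reaching *some* $\chi'(G)$-edge-colouring, and then to "steer" that colouring towards the prescribed target. For the first part, I would show that from any $k$-edge-colouring with $k > \chi'(G)$ one can reach a $\chi'(G)$-edge-colouring by Kempe changes: this is essentially Theorem~\ref{th:Vizingbis} together with an induction on $k$, the point being that as long as there are at least $\Delta(G)+2$ colours, a Vizing-type argument lets us uncolour and recolour any edge using a free colour at each endpoint, progressively eliminating the surplus colour classes; handling the gap between $\Delta(G)+1$ and $\chi'(G)$ (relevant only when $\chi'(G) = \Delta(G)$, i.e.\ the class~1 case) requires a separate argument using the triangle-freeness and the structure of Vizing fans/chains. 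The genuinely new content is the second part: given two $\chi'(G)$-edge-colourings $\alpha$ and $\beta$ of a triangle-free graph, show that $\alpha$ can be transformed into $\beta$ while being allowed to temporarily use one or a bounded number of extra colours (which are then eliminated again by the first part).

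For that second part, I would fix the target colouring $\beta$ and argue by induction on the number of edges on which the current colouring disagrees with $\beta$. Pick an edge $e = uv$ with $\alpha(e) \neq \beta(e) = c$. The obstacle is that $c$ may appear at $u$ or at $v$ under $\alpha$; one wants to use a Kempe chain (a $c$-$d$ alternating path, where $d$ is a colour missing at one endpoint) to free colour $c$ at both endpoints of $e$ and then recolour $e$ to $c$ — but doing so may disturb edges already agreeing with $\beta$. The triangle-free hypothesis is what should make this controllable: because $N(u) \cap N(v) = \emptyset$, the Vizing fans at $u$ and at $v$ involve disjoint edge sets, so the local recolouring moves (fan rotations, Kempe swaps along short alternating paths) can be organized so that each step makes progress on $e$ without undoing progress elsewhere, or can be undone with a bounded detour. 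A clean way to package this is via Kempe-equivalence of colourings restricted to the "conflict graph" and a potential-function / discharging argument showing the disagreement set can always be strictly decreased (possibly after a bounded excursion into $\chi'(G)+O(1)$ colours, which Theorem~\ref{th:mohar} or the first part then removes).

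Concretely, the key steps in order would be: (1) prove the reduction lemma that any $k$-colouring, $k>\chi'(G)$, is Kempe-equivalent (using colours $\{1,\dots,k\}$) to some $\chi'(G)$-colouring — here Theorems~\ref{th:Vizingbis} and~\ref{th:mohar} do most of the work, with triangle-freeness handling the class-$1$ subtlety; (2) set up the induction on $|\{e : \alpha(e)\neq\beta(e)\}|$ for two $\chi'(G)$-colourings and isolate a "good" conflicting edge $e=uv$, using triangle-freeness to get disjointness of the relevant neighbourhoods; (3) perform a sequence of Kempe changes — essentially a Vizing-chain augmentation adapted to the reconfiguration setting — that recolours $e$ to $\beta(e)$ while, crucially, not increasing the number of conflicts, at the cost of at most a constant number of extra colours; (4) clean up the extra colours via step~(1); (5) conclude by induction, then combine steps~(1)–(5): from the given $k$-colouring go to some $\chi'(G)$-colouring, then steer it to $\beta$.

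The main obstacle I anticipate is step~(3): making the Vizing-chain augmentation "conflict-monotone." A naive Kempe swap along a $c$-$d$ chain through $u$ can flip many edges that already agree with $\beta$, and controlling this requires choosing $d$ and the chain carefully — ideally so that the chain stays within the set of already-conflicting edges, or so that any newly created conflict is immediately compensated. I expect triangle-freeness to enter here in an essential, possibly delicate way (e.g.\ to guarantee that a suitable colour $d$ exists that is missing at both ends of a short segment, or that two fans can be rotated independently), and getting a uniformly working choice — rather than a case analysis that blows up — will be the crux of the argument.
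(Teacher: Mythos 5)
Your proposal has two genuine gaps, and both sit exactly where the paper's real work is. First, your step~(1) --- reaching \emph{some} $\chi'(G)$-colouring from a $k$-colouring --- is not ``mostly done'' by Theorems~\ref{th:Vizingbis} and~\ref{th:mohar}: those results only cover $k\geq\chi'(G)+2$, and the remaining case $k=\chi'(G)+1$ (in particular dropping from $\Delta+1$ to $\Delta$ colours when $G$ is class~1) is precisely the open difficulty the paper addresses. You defer it to ``a separate argument using triangle-freeness and the structure of Vizing fans/chains,'' but that separate argument \emph{is} the theorem; the paper supplies it via two new lemmas (Lemma~\ref{lem:loop} on non-saturated cycles and Lemma~\ref{lem:doubleloop} on double cycles --- the latter being where triangle-freeness genuinely enters, to force $x_p\neq y_q$), combined with a minimality argument over the set of reachable colourings.

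Second, your step~(3) --- the ``conflict-monotone'' augmentation driving the induction on $|\{e:\alpha(e)\neq\beta(e)\}|$ --- is the step you yourself flag as the crux, and as stated it does not go through. Kempe chains are global objects: a single $c$-$d$ swap can flip arbitrarily many edges that already agree with $\beta$, and there is no mechanism confining a chain to the conflict set. Worse, your remedy of temporarily introducing extra colours and then ``cleaning up via step~(1)'' only returns you to \emph{some} $\chi'(G)$-colouring, with no control on its distance to $\beta$, so the potential can increase and the induction collapses. The paper sidesteps this entirely by never measuring agreement edge-by-edge: it reduces to the $\Delta$-regular case, fixes one colour class $M$ of the target (a perfect matching), uses the much coarser potential consisting of the number of bad edges and then the number of ugly edges for that single class, proves that a minimal reachable colouring has $M$ monochromatic, then deletes $M$ and inducts on $\Delta$. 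To salvage your plan you would need to replace the disagreement count by a potential of this colour-class-at-a-time type; as written, the proposal assumes the hard part in step~(1) and leaves the decisive step~(3) unproved.
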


Theorem~\ref{th:main} improves upon an earlier theorem concerning bipartite graphs~\cite{asratian2009note}. We derive the immediate following corollary.

\begin{corollary}\label{cor:main}
For every triangle-free graph $G$, all $(\chi'(G)+1)$-edge-colourings are Kempe equivalent.
\end{corollary}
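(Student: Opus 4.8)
The plan is to derive this directly from Theorem~\ref{th:main}, using only the elementary fact that Kempe changes are reversible. First I would fix, once and for all, a proper $\chi'(G)$-edge-colouring $c_0$ of $G$; such a colouring exists by the very definition of the chromatic index. Regarding $c_0$ as a colouring with colours from $\{1,\ldots,\chi'(G)+1\}$ (it simply never uses the colour $\chi'(G)+1$), it is in particular a $(\chi'(G)+1)$-edge-colouring.

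Next, let $\alpha$ be an arbitrary $(\chi'(G)+1)$-edge-colouring. Applying Theorem~\ref{th:main} with $k=\chi'(G)+1$, there is a series of Kempe changes, using only colours from $\{1,\ldots,\chi'(G)+1\}$, transforming $\alpha$ into the fixed colouring $c_0$; that is, $\alpha$ is Kempe-equivalent to $c_0$.

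Finally, I would observe that performing a Kempe change on a Kempe chain $C$ and then performing a Kempe change on the same set of edges (which, after the first swap, is again a bicoloured Kempe chain with the two colours exchanged) restores the original colouring. Hence Kempe-equivalence is symmetric, and it is transitive since two series of Kempe changes can be concatenated; so it is an equivalence relation. Consequently, given any two $(\chi'(G)+1)$-edge-colourings $\alpha$ and $\beta$, each is Kempe-equivalent to $c_0$, and therefore $\alpha$ is Kempe-equivalent to $\beta$ through $\alpha \to c_0 \to \beta$. This proves the corollary. There is no genuine obstacle: all the difficulty lies in Theorem~\ref{th:main}, and the only point needing (minimal) care is that Kempe-equivalence, as defined via colours from $\{1,\ldots,k\}$, is symmetric, so that the single ``hub'' colouring $c_0$ suffices to connect every pair.
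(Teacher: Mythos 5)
Your argument is correct and is exactly the route the paper intends when it calls the corollary ``immediate'': apply Theorem~\ref{th:main} with $k=\chi'(G)+1$ to bring each of the two colourings to a common fixed $\chi'(G)$-edge-colouring, then use the reversibility and transitivity of Kempe changes. Nothing is missing.
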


The general approach toward Theorem~\ref{th:main} follows that of~\cite{casselgren}, which itself follows that of~\cite{mohar2006kempe}. From a $k$-edge-colouring with $k>\chi'(G)$, say we aim to reach a given $\chi'(G)$-colouring $\alpha$. We select a colour class $M$ of $\alpha$, and seek through a series of Kempe changes to reach a $k$-edge-colouring where $M$ is monochromatic and its colour appears on no other edge. We can then delete $M$ and apply induction on $\chi'(G)$.

\subsection*{Complexity implications}

As is often mentioned, Vizing's original argument can be turned into a polynomial-time algo\-rithm---this was formally noted by Misra and Gries in 1992~\cite{misra1992constructive}. However, deciding whether a graph $G$ is $\Delta(G)$-edge-colourable is an NP-complete problem~\cite{holyer1981np}, even in the case of triangle-free graphs~\cite{koreas1997np}. This leaves little hope for extracting a polynomial-time algorithm from the proof of Theorem~\ref{th:main}. There is however no difficulty in detecting the difference between Vizing's argument and ours: we start by assuming full access to a $\Delta(G)$-edge-colouring, which is crucial in the proof.

\subsection*{More about Kempe changes}

While this is irrelevant for the rest of the paper, let us mention some more applications and connections of Kempe changes to other problems. Since its introduction in the context of $4$-colouring planar graphs, much work has focused on determining which graph classes have good properties regarding Kempe-equivalence of their vertex colourings, see e.g.~\cite{mohar2006kempe} for a comprehensive overview or~\cite{bonamy2019conjecture} for a recent result on general graphs. We refer the curious reader to the relevant chapter of a 2013 survey by Cereceda~\cite{van2013complexity}. Kempe-equivalence falls within the wider setting of combinatorial reconfiguration, which~\cite{van2013complexity} is also an excellent introduction to.

Perhaps surprisingly, Kempe-equivalence has direct applications in approximate counting and statistical physics (see e.g.\ \cite{sokal2000personal,mohar2009new} for nice overviews). Closer to graph theory, Kempe-equivalence can be studied with a goal of obtaining a random colouring, by proving that a given random walk is a rapidly mixing Markov chain, see e.g.~\cite{vigoda}.

\subsection*{General setting of the proof}

Let us argue that it suffices to handle the case of a $\chi'(G)$-regular graph. 
Indeed, any graph $G$ is the induced subgraph of a $\chi'(G)$-regular graph that is also $\chi'(G)$-edge-colourable. To see this, we decrease step-by-step the difference between $\chi'(G)$ and the smallest degree of a vertex in $G$. Let $\beta$ be a $\chi'(G)$-edge-colouring of $G$, and consider two copies of $G$, each coloured $\beta$. We add an edge between both copies of every vertex of smallest degree: since both copies of $G$ are coloured the same, there is a colour available for the new edge. Note that this construction does not create any triangle. See Figure~\ref{fig:regular} for an example.
\begin{figure}[ht!]
\centering
\begin{subfigure}[t]{0.15\textwidth}
\centering
\begin{tikzpicture}[node distance=8mm and 8mm, auto]
 \node[dot](s){};
 \node[dot,right=of s](b){}
 edge[pil] node {2} (s);
 \node[dot,above =of b](a){}
 edge[pil,above] node {1} (s);
 \node[dot,below =of b](c){}
 edge[pil] node {3} (s);
  \node[below= of c] (empty) {};
\end{tikzpicture}
\caption{Graph $G$}
\end{subfigure}
\hfill
\begin{subfigure}[t]{0.25\textwidth}
\centering
\begin{tikzpicture}[node distance=8mm and 8mm, auto]
 \node[dot](s){};
 \node[dot,right=of s](b){}
 edge[pil] node {2} (s);
 \node[dot,above =of b](a){}
 edge[pil,above] node {1} (s);
 \node[dot,below =of b](c){}
 edge[pil] node {3} (s);
 
  \node[dot,right=of b](b2){}
  edge[pil] node {3} (b);
  \node[dot,right =of b2](s2){}
   edge[pil] node {2} (b2);
 \node[dot,above =of b2](a2){}
 edge[pil,above] node {1} (s2)
 edge[pil] node {2} (a);
 \node[dot,below =of b2](c2){}
 edge[pil,below] node {3} (s2)
 edge[pil,above] node {1} (c);
 \node[below= of c] (empty) {};
 
\end{tikzpicture}
\caption{Step 1 : minimum degree increases to 2}
\end{subfigure}
\hfill
\begin{subfigure}[t]{0.55\textwidth}
\centering
\begin{tikzpicture}[node distance=8mm and 8mm, auto]
 \node[dot](s){};
 \node[dot,right=of s](b){}
 edge[pil] node {2} (s);
 \node[dot,above =of b](a){}
 edge[pil,above] node {1} (s);
 \node[dot,below =of b](c){}
 edge[pil] node {3} (s);
 
  \node[dot,right=of b](b2){}
  edge[pil] node {3} (b);
  \node[dot,right =of b2](s2){}
   edge[pil, below] node {2} (b2);
 \node[dot,above =of b2](a2){}
 edge[pil,above] node {1} (s2)
 edge[pil] node {2} (a);
 \node[dot,below =of b2](c2){}
 edge[pil,below] node {3} (s2)
 edge[pil,above] node {1} (c);
 
  \node[dot, right=of s2](s3){};
 \node[dot,right=of s3](b3){}
 edge[pil, below] node {2} (s3);
 \node[dot,above =of b3](a3){}
 edge[pil,above] node {1} (s3);
 \node[dot,below =of b3](c3){}
 edge[pil] node {3} (s3);
 
  \node[dot,right=of b3](b4){}
  edge[pil] node {3} (b3);
  \node[dot,right =of b4](s4){}
   edge[pil] node {2} (b4);
 \node[dot,above =of b4](a4){}
 edge[pil,above] node {1} (s4)
 edge[pil] node {2} (a3);
 \node[dot,below =of b4](c4){}
 edge[pil,below] node {3} (s4)
 edge[pil,above] node {1} (c3);
  \node[below= of c] (empty) {};
 \path (a) edge[pil, bend left=30] node {3}  (a3) ;
 \path (a2) edge[pil, bend left=30] node {3}  (a4) ;
 \path (b) edge[pil, bend left=30] node {1}  (b3) ;
 \path (b2) edge[pil, bend left=30] node {1}  (b4) ;
 \path (c) edge[pil, bend right=30, below] node {2}  (c3) ;
 \path (c2) edge[pil, bend right=30,below] node {2}  (c4) ;
\end{tikzpicture}
\caption{Step 2 (final): a 3-regular graph}
\end{subfigure}
\caption{Construction of a $3$-regular $3$-edge colourable graph from a $3$-edge colourable graph}
\label{fig:regular}

\end{figure}
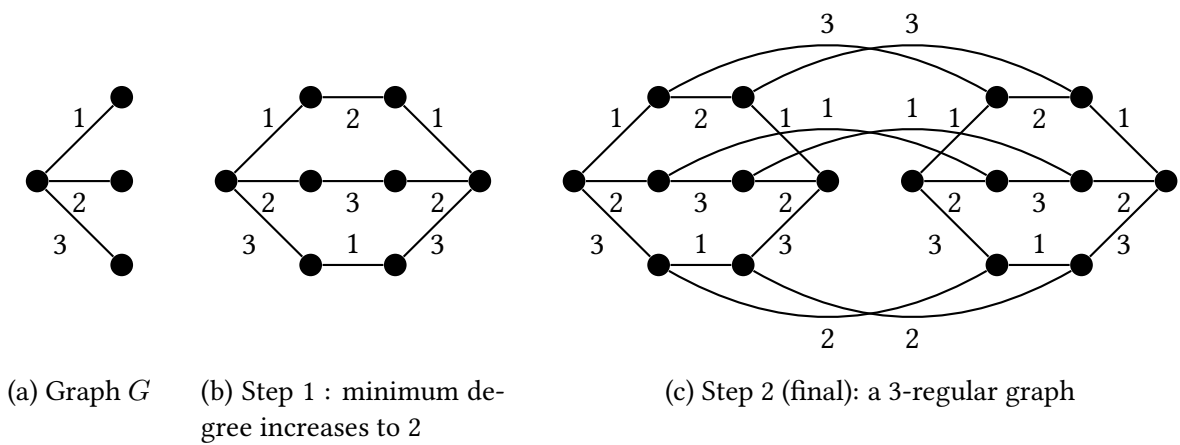

Additionally, note that any series of Kempe changes in a graph has a natural transposition to any induced subgraph of it. Indeed, if a Kempe chain in the graph corresponds to more than one Kempe chain in the induced subgraph, it suffices to operate the swap in every such Kempe chain.

This allows us to restrict our attention to the case where $\Delta(G)=\chi'(G)$ and the colour class $M$ is a perfect matching, which will prove to be convenient. Theorem~\ref{th:main} was already proved in~\cite{mohar2006kempe} when $k\geq \chi'(G)+2$. Therefore, we focus on the case $k=\chi'(G)+1$, though the reader can convince themself
that the proof could be adapted for higher $k$ with a loss in simplicity.

From now on, we consider only $(\Delta(G)+1)$-edge-colourings of a $\Delta(G)$-regular graph $G$. Therefore, for every such colouring $\alpha$, and for every vertex $u$, there is a unique colour $m_\alpha(u)$ in $ \{1,\ldots,\Delta+1\} \setminus \{\alpha(uv) \mid v \in N(u)\}$, referred to as the \emph{missing colour} of $\alpha$ at $u$.

We defined the notion of Kempe changes in the introduction: let us introduce some helpful notation around them. For any colouring $\alpha$, for any two (distinct) colours $c,d$, we denote by $K_\alpha(c,d)$ the subgraph of $G$ induced by the edges coloured $c$ or $d$. The notion of a component of $K_\alpha(c,d)$ containing an edge $e$ is straightforward. We extend this notion to that of a component containing a vertex $u$. 
To describe a Kempe change, we will indicate that we swap the component of $K_\alpha(c,d)$ containing this edge or that vertex, for some given $c$ and $d$. We will write $\alpha \leftrightsquigarrow \beta$ to indicate that two $k$-edge-colourings $\alpha$ and $\beta$ are Kempe-equivalent. Formally, we should indicate the bound on the number of colours involved in an intermediary colouring in the sequence of Kempe-changes. However, we believe that there is no ambiguity anywhere regarding this. In particular, throughout the proof we only involve colours in $\{1,\ldots,\Delta(G)+1\}$.

\section{Fan-like tools}\label{sec:fans}

Let $\alpha$ be a $(\Delta+1)$-edge-colouring of a $\Delta$-regular graph $G$. Consider an edge $uv$, and say we want to recolour it. If $m(u)=m(v)$, this can be done immediately without impacting the rest of the colouring. Therefore, let us consider $m(v) \neq m(u)$, and look at the obstacles around $u$. There is an edge $uw$ coloured $m(v)$. Again, if we can recolour it without impacting the rest of the colouring, we can then recolour $uv$ into $m(v)$. This prompts us to define a directed graph $D_u(\alpha)$ on vertex set $\{uw \mid w \in N(u)\}$, where a vertex $uw$ has a directed edge to $ux$ if $m(w)=\alpha(ux)$ (see Figure~\ref{fig:pcc}).
Note that by definition, every vertex in $D_u(\alpha)$ has out-degree $0$ or $1$, and arbitrarily large in-degree. 
Consider the sequence $X_u(\alpha,v)$ of vertices than can be reached from $uv$ in $D_u(\alpha)$. For both $D_u(\alpha)$ and $X_u(\alpha,v)$, we drop $\alpha$ from the notation when it is clear from context.

We have three possible scenarios, by increasing difficulty (see Figure~\ref{fig:pcc} for an illustration):
\begin{enumerate}
    \item\label{it:path} $X_u(v)$ induces a path in $D_u$.
    \item\label{it:cycle} $X_u(v)$ induces a cycle in $D_u$. 
    \item\label{it:comet} $X_u(v)$ induces a comet in $D_u$, where a \emph{comet} is obtained from a directed path by adding an edge from the sink to a vertex that is neither the source nor the sink.
\end{enumerate}

\begin{figure}[!ht]
    \hspace{0.15cm}
\begin{tikzpicture}[node distance=7mm and 7mm, auto]
 nodes
  \node[punkt,label={-90:$u$}](u){1};
  \node[punkt, above right=of u, label={-45:$x_0$}](x0){3}
  	edge[pil] node {2}(u);
  \node[punkt, below right=of u, label={-45:$x_1$}](x1){4}
  	edge[pil] node {3}(u);
  \node[punkt, below left=of u, label={-135:$x_2$}](x2){5}
  	edge[pil] node {4}(u);
  \node[punkt, above left=of u, label={-135:$x_3$}](x3){1}
  	edge[pil] node {5}(u);
\end{tikzpicture}
  \hfill
\begin{tikzpicture}[node distance=7mm and 7mm, auto]
 nodes
  \node[punkt,label={-90:$u$}](u){1};
  \node[punkt, above right=of u, label={-45:$x_0$}](x0){3}
  	edge[pil] node {2}(u);
  \node[punkt, below right=of u, label={-45:$x_1$}](x1){4}
  	edge[pil] node {3}(u);
  \node[punkt, below left=of u, label={-135:$x_2$}](x2){5}
  	edge[pil] node {4}(u);
  \node[punkt, above left=of u, label={-135:$x_3$}](x3){2}
  	edge[pil] node {5}(u);
\end{tikzpicture}
  \hfill
\begin{tikzpicture}[node distance=7mm and 7mm, auto]
 nodes
  \node[punkt,label={-90:$u$}](u){1};
  \node[punkt, above right=of u, label={-45:$x_0$}](x0){3}
  	edge[pil] node {2}(u);
  \node[punkt, below right=of u, label={-45:$x_1$}](x1){4}
  	edge[pil] node {3}(u);
  \node[punkt, below left=of u, label={-135:$x_2$}](x2){5}
  	edge[pil] node {4}(u);
  \node[punkt, above left=of u, label={-135:$x_3$}](x3){3}
  	edge[pil] node {5}(u);
\end{tikzpicture}
    \hspace{0.2cm}
    \hfill
  
 \vspace{8mm}
 
   \hspace{0.9cm}
  \begin{tikzpicture}[node distance=1.3cm and 1.3cm, auto]
 nodes
  \node(ux1) {$ux_0$};
  \node[below=of ux1](ux2){$ux_1$}
    edge[dir] (ux1);
  \node[left=of ux2](ux3){$ux_2$}
    edge[dir] (ux2);
  \node[above=of ux3](ux4){$ux_3$}
    edge[dir] (ux3);
  \end{tikzpicture}
  \hfill
   \begin{tikzpicture}[node distance=1.3cm and 1.3cm, auto]
 nodes
  \node(ux1) {$ux_0$};
  \node[below=of ux1](ux2){$ux_1$}
    edge[dir] (ux1);
  \node[left=of ux2](ux3){$ux_2$}
    edge[dir] (ux2);
  \node[above=of ux3](ux4){$ux_3$}
    edge[dir] (ux3);
  \path (ux1) edge[dir] (ux4);
 
 \end{tikzpicture}
 \hfill
  \begin{tikzpicture}[node distance=1.3cm and 1.3cm, auto]
 nodes
  \node(ux1) {$ux_0$};
  \node[below=of ux1](ux2){$ux_1$}
    edge[dir] (ux1);
  \node[left=of ux2](ux3){$ux_2$}
    edge[dir] (ux2);
  \node[above=of ux3](ux4){$ux_3$}
    edge[dir] (ux3);
  \path (ux2) edge[dir] (ux4);
 
 \end{tikzpicture}
    \hspace{0.9cm}
    \hfill
   
 \caption{From left to right, the three possible scenarios for a sequence $X_{u}(\alpha,x_0)$ in the digraph $D_{u}(\alpha)$: a path, a cycle or a comet. (Vertices are labeled by the missing colors.)}
 \label{fig:pcc}
\end{figure}
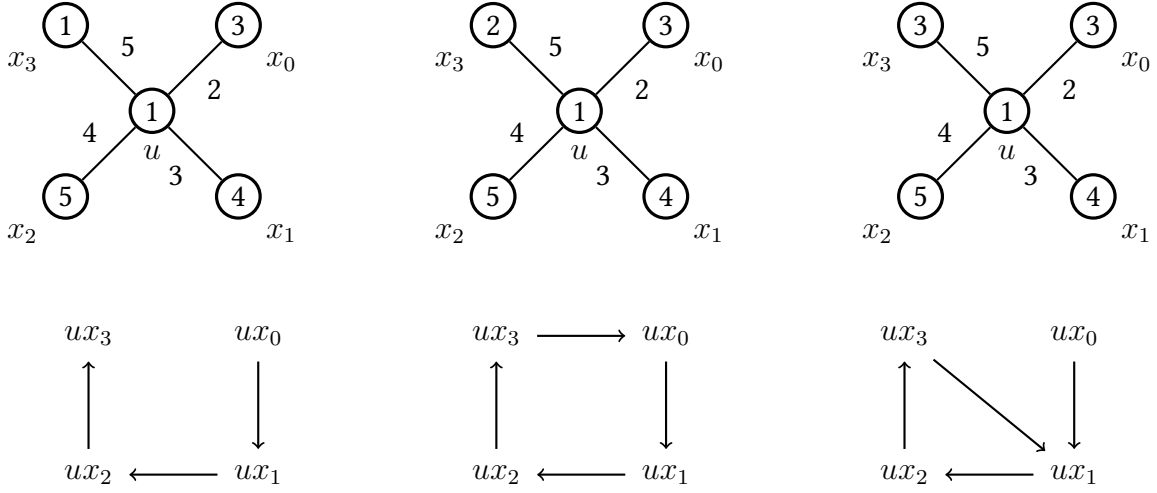

For any edge $uv$, if $X_u(\alpha,v)$ induces a path or cycle in $D_u$, we denote by $X_{u}^{-1}(\alpha,v)$ the colouring obtained from $\alpha$ by assigning the colour $m(w)$ to every edge $uw \in X_u(\alpha,v)$. 
Note that for every edge $uw \in X_u(\alpha,v)$, we have $m_{X_u^{-1}(\alpha,v)}(w)=\alpha(uw)$. 
We refer to this operation on $\alpha$ as \emph{inverting} $X_u(\alpha,v)$. Figure~\ref{fig:path-swap} illustrates the result of inverting a path. We drop $v$ from the notation when there is no ambiguity. 

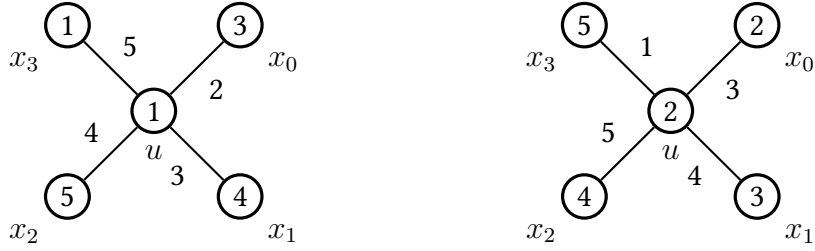
\begin{figure}[!ht]
\begin{center}

\begin{tikzpicture}[node distance=7mm and 7mm, auto]
 nodes
  \node[punkt,label={-90:$u$}](u){1};
  \node[punkt, above right=of u, label={-45:$x_0$}](x0){3}
    edge[pil] node {2}(u);
  \node[punkt, below right=of u, label={-45:$x_1$}](x1){4}
    edge[pil] node {3}(u);
  \node[punkt, below left=of u, label={-135:$x_2$}](x2){5}
    edge[pil] node {4}(u);
  \node[punkt, above left=of u, label={-135:$x_3$}](x3){1}
    edge[pil] node {5}(u);
\end{tikzpicture}
\hspace{25mm}
\begin{tikzpicture}[node distance=7mm and 7mm, auto]
 nodes
  \node[punkt,label={-90:$u$}](u){2};
  \node[punkt, above right=of u, label={-45:$x_0$}](x0){2}
    edge[pil] node {3}(u);
  \node[punkt, below right=of u, label={-45:$x_1$}](x1){3}
    edge[pil] node {4}(u);
  \node[punkt, below left=of u, label={-135:$x_2$}](x2){4}
    edge[pil] node {5}(u);
  \node[punkt, above left=of u, label={-135:$x_3$}](x3){5}
    edge[pil] node {1}(u);
\end{tikzpicture}

 \end{center}
\caption{Coloring $\alpha$ (left) and $X_{u}^{-1}(\alpha,x_0)$ (right) when $X_{u}(\alpha,x_0)$ is a path.}
\label{fig:path-swap}
\end{figure}

In order to have an overview of the key ingredients in the proof, let us now state an Observation and some Lemmas, the proof of which are postponed to the following subsections.

\begin{observation}\label{obs:path}
For any vertex $u$ and path $X_u(\alpha)$ in $D_u(\alpha)$, $\alpha \leftrightsquigarrow X_u^{-1}(\alpha)$. 
\end{observation}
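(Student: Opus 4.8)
The plan is a straightforward induction on the length of the path $X_u(\alpha)$, peeling it off one edge at a time from the sink. Write $X_u(\alpha)=(ux_0,ux_1,\ldots,ux_k)$ with the arcs $ux_i\to ux_{i+1}$, so that $m(x_i)=\alpha(ux_{i+1})$ for $0\le i<k$, and note that $x_0,\ldots,x_k$ are pairwise distinct since $X_u(\alpha)$ induces a path. The one structural fact to extract first is that $m(x_k)=m(u)$: indeed $ux_k$ has out-degree $0$ in $D_u(\alpha)$, meaning no edge at $u$ is coloured $m(x_k)$; since $G$ is $\Delta$-regular and $\alpha$ uses $\Delta+1$ colours, the only colour absent at $u$ is $m(u)$, whence $m(x_k)=m(u)$.

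For the base case $k=0$ we have $m(x_0)=m(u)$, so recolouring $ux_0$ from $\alpha(ux_0)$ to $m(u)$ is a single-edge Kempe change (the component of $K_\alpha(\alpha(ux_0),m(u))$ containing $ux_0$ is exactly $\{ux_0\}$, as $m(u)$ misses both endpoints), and the resulting colouring is precisely $X_u^{-1}(\alpha)$. For $k\ge 1$, apply the induction hypothesis to the strictly shorter path $X_u(\alpha,x_1)=(ux_1,\ldots,ux_k)$ to get $\alpha\leftrightsquigarrow\gamma$, where $\gamma=X_u^{-1}(\alpha,x_1)$. By definition $\gamma$ agrees with $\alpha$ off the edges $ux_1,\ldots,ux_k$, and on those $\gamma(ux_i)=m(x_i)$; using $m(x_i)=\alpha(ux_{i+1})$ for $i<k$ and $m(x_k)=m(u)$, one computes that the multiset of colours on edges at $u$ in $\gamma$ is $\{\alpha(ux_0)\}\cup\{\alpha(ux_2),\ldots,\alpha(ux_k)\}\cup\{m(u)\}$, so that $m_\gamma(u)=\alpha(ux_1)=m(x_0)$. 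Since recolouring $ux_1,\ldots,ux_k$ only affected missing colours at $u$ and at $x_1,\ldots,x_k$ (all distinct from $x_0$), the colour $m(x_0)$ is still missing at $x_0$ in $\gamma$. Hence $m(x_0)=m_\gamma(u)$ misses both endpoints of $ux_0$, so recolouring $ux_0$ from $\gamma(ux_0)=\alpha(ux_0)$ to $m(x_0)$ is a single-edge Kempe change, and the result is exactly $X_u^{-1}(\alpha)$. All intermediate colourings use only colours in $\{1,\ldots,\Delta+1\}$, so $\alpha\leftrightsquigarrow X_u^{-1}(\alpha)$.

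There is essentially no real obstacle here: the content is the bookkeeping that each of the $k+1$ single-edge recolourings is legitimate, i.e.\ that the target colour misses both endpoints of the edge at the moment it is applied. The key enabler is that recolouring $ux_i$ ``rotates'' the missing colour at $u$ from $\alpha(ux_{i+1})$ to $\alpha(ux_i)$ while leaving the missing colours at the other $x_j$ untouched; equivalently, one may recolour $ux_k,ux_{k-1},\ldots,ux_0$ in this order directly, with the same verification. This is the easiest of the three scenarios identified above, and I expect the cycle and comet cases to require substantially more care.
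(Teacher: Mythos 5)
Your proof is correct and follows essentially the same route as the paper's: an induction on the path length built on the key observation that $m(x_k)=m(u)$ (since $ux_k$ has out-degree $0$), realised as a sequence of single-edge Kempe changes performed from the sink back to the source. The only cosmetic difference is that you peel off the source $ux_0$ in the induction (inverting the suffix $(ux_1,\ldots,ux_k)$ first), whereas the paper swaps the sink $ux_p$ first and inducts on the prefix; the resulting sequence of swaps is identical, as you yourself note.
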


\begin{definition}\label{def:saturated}
For any vertex $u$ and cycle $X_u(\alpha)=(ux_0,\ldots,ux_p)$ in $D_u(\alpha)$, we say that $X_u(\alpha)$ is \emph{saturated} if for every $0\leq i\leq p$, the component of $K(\alpha(ux_i),m(u))$ containing $u$ also contains $x_{i-1}$ (resp.~$x_p$ if $i=0$).
\end{definition}

The same conclusion holds for cycles unless the sequence is saturated:

\begin{lemma}\label{lem:loop}
For any vertex $u$ and non-saturated cycle $X_u(\alpha)$ in $D_u(\alpha)$, $\alpha \leftrightsquigarrow X_u^{-1}(\alpha)$. 
\end{lemma}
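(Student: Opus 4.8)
The plan is to break the cyclic obstruction at $u$ into a path by one Kempe swap, apply Observation~\ref{obs:path} to invert the resulting path, and then repair the collateral recolouring with a second Kempe swap; the failure of saturation is precisely what makes the first swap compatible with this scheme. Write the cycle as $X_u(\alpha)=(ux_0,\dots,ux_p)$, so that $m(x_i)=\alpha(ux_{i+1})$ for all $i$ (indices mod $p+1$), and set $c\eqdef m(u)$. Since $c$ is missing at $u$ we have $c\notin\{\alpha(ux_0),\dots,\alpha(ux_p)\}$, and since $m(x_i)=\alpha(ux_{i+1})\ne c$ the colour $c$ is present at every $x_i$. As $X_u(\alpha)$ is not saturated, there is an index $i$ for which the component of $K_\alpha(\alpha(ux_i),c)$ containing $u$ does not contain $x_{i-1}$ (indices mod $p+1$); cyclically shifting the indices of the cycle changes neither $X_u^{-1}(\alpha)$ nor this property, so we may assume $i=0$, i.e.\ $x_p\notin V(P)$, where $P$ denotes the component of $K_\alpha(\alpha(ux_0),c)$ containing $u$. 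Since $c$ is missing at $u$ and $ux_0$ is the unique edge at $u$ of colour $\alpha(ux_0)$, the edge $ux_0$ is the only edge of $P$ incident to $u$; hence $P$ is a path with $u$ as an endpoint, and $Q\eqdef P-ux_0$ is a sub-path having no edge incident to $u$.

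First I swap $P$, obtaining a colouring $\alpha'$: this recolours $ux_0$ from $\alpha(ux_0)$ to $c$, interchanges the colours $\alpha(ux_0)$ and $c$ along the edges of $Q$, and leaves every other edge untouched. In particular $ux_1,\dots,ux_p$ keep their colours, $m_{\alpha'}(u)=\alpha(ux_0)$, and the missing colour at each $x_j$ is unchanged: for $j\le p-1$ we have $m_\alpha(x_j)=\alpha(ux_{j+1})\notin\{\alpha(ux_0),c\}$, so $x_j$ is not an endpoint of $P$ and the swap preserves its missing colour, while $x_p$ is untouched because $x_p\notin V(P)$. Consequently, in $D_u(\alpha')$ we still have $ux_j\to ux_{j+1}$ for $0\le j\le p-1$, but $ux_p$ now has out-degree $0$: indeed $m_{\alpha'}(x_p)=\alpha(ux_0)$ and no edge at $u$ is coloured $\alpha(ux_0)$ in $\alpha'$. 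Therefore $X_u(\alpha',x_0)=(ux_0,\dots,ux_p)$ is a \emph{path} in $D_u(\alpha')$.

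By Observation~\ref{obs:path}, $\alpha'\leftrightsquigarrow\alpha''\eqdef X_u^{-1}(\alpha',x_0)$, where $\alpha''$ assigns to $ux_j$ the colour $m_{\alpha'}(x_j)$, that is $\alpha(ux_{j+1})$ for $0\le j\le p-1$ and $\alpha(ux_0)$ for $j=p$. This is exactly the cyclic recolouring of the edges at $u$ defining $X_u^{-1}(\alpha)$, so $\alpha''$ and $X_u^{-1}(\alpha)$ agree on every edge except those of $Q$, on which $\alpha''$ carries the colours $\alpha(ux_0)$ and $c$ interchanged relative to $\alpha$. To conclude, I claim $Q$ is a component of $K_{\alpha''}(\alpha(ux_0),c)$. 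In $\alpha$, the only edges coloured $\alpha(ux_0)$ or $c$ with an endpoint in $V(Q)\subseteq V(P)$ are the edges of $P$, namely those of $Q$ together with $ux_0$. Going from $\alpha$ to $\alpha''$ we recoloured only the edges of $Q$ (keeping them within $\{\alpha(ux_0),c\}$) and the edges $ux_0,\dots,ux_p$, among which $ux_0$ loses the colour $\alpha(ux_0)$, each $ux_j$ with $1\le j\le p-1$ receives the colour $\alpha(ux_{j+1})\notin\{\alpha(ux_0),c\}$, and $ux_p$ receives the colour $\alpha(ux_0)$ but has its other endpoint $x_p\notin V(Q)$. Hence in $\alpha''$ the edges coloured $\alpha(ux_0)$ or $c$ with an endpoint in $V(Q)$ are exactly those of the connected path $Q$, which proves the claim. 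Swapping $Q$ then gives $\alpha''\leftrightsquigarrow X_u^{-1}(\alpha)$, and chaining the three equivalences yields $\alpha\leftrightsquigarrow X_u^{-1}(\alpha)$.

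I expect the first swap to be the crux: for the opened sequence $X_u(\alpha',x_0)$ to be a genuine path rather than again a cycle, the missing colour at $x_p$ must survive it, which is guaranteed exactly by $x_p\notin V(P)$ — the defining failure of saturation. The same condition also keeps $Q$ disjoint from $u$, so that the repairing swap in the last step does not disturb the edges at $u$, which are already in their target colours. Everything else is routine bookkeeping about which edges and missing colours a Kempe swap can affect.
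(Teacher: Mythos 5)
Your proof is correct, and it follows the same three-step skeleton as the paper's: one Kempe swap to break the cycle in $D_u$ into a path, an application of Observation~\ref{obs:path} to invert that path, and one final Kempe swap to repair the collateral damage. The difference is in \emph{which} component of $K_\alpha(\alpha(ux_0),m(u))$ you swap first. The paper swaps the component containing $x_p$ (which, by non-saturation, is disjoint from $u$): this leaves every edge at $u$ untouched, changes $m(x_p)$ to $m(u)$ so that the cycle opens into a path ``at the far end'', and the repair swap is then $C\cup\{ux_p\}$. You instead swap the component $P$ containing $u$, which recolours $ux_0$ and opens the cycle ``at the near end'' by removing the colour $\alpha(ux_0)$ from $u$ while keeping $m(x_p)=\alpha(ux_0)$ intact; the inversion then places the correct final colour on $ux_p$ directly, and the repair swap $Q=P-ux_0$ avoids $u$ altogether. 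Your route costs a little more bookkeeping (checking that the interior vertices of $P$ keep their missing colours, and that $Q$ really is a full component of $K_{\alpha''}(\alpha(ux_0),m(u))$ after the inversion), all of which you carry out correctly; the paper's choice makes the preservation of the fan structure at $u$ immediate. Both arguments use non-saturation in the same essential way, namely to guarantee $x_p\notin V(P)$ so that $m(x_p)$ survives the first swap.
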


To reach the desired conclusion for a saturated cycle, we need further assumptions, including the absence of triangles, as follows:

\begin{lemma}\label{lem:doubleloop}
For any vertex $u$ and saturated cycle $X_u(\alpha,v)$ in $D_u(\alpha)$, if $G$ is triangle-free, and
if the sequence $Y_v(\alpha,u)$ of vertices of $D_v(\alpha)$ induces a cycle, then $\alpha \leftrightsquigarrow X_u^{-1}(\alpha,v)$. 
\end{lemma}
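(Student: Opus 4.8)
We are given a vertex $u$ with a saturated cycle $X_u(\alpha,v) = (ux_0,\ldots,ux_p)$ where $x_0 = v$, the graph $G$ is triangle-free, and the sequence $Y_v(\alpha,u)$ in $D_v(\alpha)$ induces a cycle. The goal is to show $\alpha \leftrightsquigarrow X_u^{-1}(\alpha,v)$. Since a direct inversion is blocked by saturation (every relevant Kempe chain at $u$ drags in the neighbouring vertex), the plan is to first perform a single Kempe change that breaks the saturation — specifically, operate on a Kempe chain sitting at $v$, exploiting the cycle structure of $Y_v(\alpha,u)$ — and then argue that in the resulting colouring the cycle at $u$ is no longer saturated, so Lemma~\ref{lem:loop} applies.

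**The intermediate colouring.**

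First I would invert $Y_v(\alpha,u)$: since $Y_v(\alpha,u)$ is a cycle in $D_v(\alpha)$, Lemma~\ref{lem:loop} is not directly available (it could itself be saturated), but the point is that inverting a cycle at $v$ is an operation we understand via a short sequence of Kempe changes, and it only disturbs edges incident to $v$. Call the result $\alpha'$. I would then check two things: (i) $\alpha' \leftrightsquigarrow \alpha$ — this should follow from the same chain-pushing argument used to prove Observation~\ref{obs:path}/Lemma~\ref{lem:loop}, applied at $v$, together with triangle-freeness to ensure the chains at $v$ and the structure at $u$ interact cleanly; and (ii) in $\alpha'$, the sequence $X_u(\alpha',v)$ is still the "same" cycle combinatorially but is now \emph{non-saturated}. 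For (ii), the key observation is that saturation of $X_u(\alpha,v)$ forces, for each $i$, the Kempe chain $K_\alpha(\alpha(ux_i),m(u))$ through $u$ to reach $x_{i-1}$; after inverting the cycle $Y_v(\alpha,u)$ the colours on edges at $v$ are shifted, so the chain $K_{\alpha'}(\alpha'(uv),m(u))$ at the index corresponding to $v$ no longer terminates at the expected neighbour — the swap at $v$ has "cut" that chain. Here triangle-freeness is essential: $x_{i-1}$ and $v = x_0$ and $u$ are pairwise non-adjacent in the relevant configurations, so swapping a chain at $v$ does not accidentally re-route or re-close the chain at $u$.

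**Finishing.**

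Once $X_u(\alpha',v)$ is a non-saturated cycle in $D_u(\alpha')$, Lemma~\ref{lem:loop} gives $\alpha' \leftrightsquigarrow X_u^{-1}(\alpha',v)$. It then remains to identify $X_u^{-1}(\alpha',v)$ with $X_u^{-1}(\alpha,v)$, or at least to show they are Kempe-equivalent: both colourings assign to each edge $ux_j$ the colour $m(x_{j-1})$ (the missing colours at the $x_j$'s are unchanged by an inversion happening only at edges incident to $v$, provided $v \notin \{x_0,\ldots,x_p\}\setminus\{x_0\}$ and the $x_j$ are not adjacent to $v$ — again triangle-freeness), and they agree off the edges at $u$ and $v$; any residual discrepancy on edges at $v$ is undone by inverting $Y_v$ back, which is a legal Kempe sequence. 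Composing: $\alpha \leftrightsquigarrow \alpha' \leftrightsquigarrow X_u^{-1}(\alpha',v) \leftrightsquigarrow X_u^{-1}(\alpha,v)$.

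**Main obstacle.**

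The delicate point is step (ii): proving that inverting the cycle $Y_v(\alpha,u)$ genuinely destroys saturation of $X_u(\alpha,v)$ rather than merely relabelling it into another saturated cycle. This requires a careful case analysis of how the Kempe chains $K(\alpha(ux_i),m(u))$ behave under the swaps performed at $v$, and it is exactly here that the hypothesis that $Y_v(\alpha,u)$ is a \emph{cycle} (as opposed to a path or comet) is used — the cyclic structure guarantees that the swap at $v$ changes the colour $\alpha(uv)$ in a controlled way, creating a mismatch with $m(u)$ along the corresponding chain. I would expect to need triangle-freeness a second time here to rule out the chain at $u$ closing up through a common neighbour of $u$ and $v$.
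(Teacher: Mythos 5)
There is a genuine gap, and it sits at the very first step of your plan. You propose to begin by inverting the cycle $Y_v(\alpha,u)$ and you acknowledge that Lemma~\ref{lem:loop} ``is not directly available (it could itself be saturated)'', but you then proceed as if the inversion were nonetheless realizable by ``a short sequence of Kempe changes''. That is precisely what is not established: if $Y_v(\alpha,u)$ is a saturated cycle, the only statement that would let you invert it is the lemma you are currently proving, applied at $v$ with the roles of $u$ and $v$ exchanged (and its hypothesis ``$X_u$ induces a cycle'' is exactly the symmetric hypothesis you would need). The argument is therefore circular in the case where both fans are saturated, which is the genuinely hard case. Relatedly, your step (ii) --- that the operation at $v$ destroys saturation of $X_u$ --- is the entire content of the lemma and is only asserted, with the case analysis deferred; nothing in the cyclic structure of $Y_v$ guarantees that a swap at $v$ ``cuts'' the chain $K(\alpha(uv),m(u))$ through $u$ rather than leaving the cycle saturated.

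The paper's proof avoids both problems by never inverting $Y_v$ at the outset. Its first move is to swap the component $C$ of $K(\alpha(uv),m(u))$ containing $y_q$ (the last vertex of $Y_v$): saturation of $X_u$ forces $x_p$ into the component through $u$, and triangle-freeness plus $q\ge 2$ guarantee that $y_q$ lies in a \emph{different} component, disjoint from $X_u\cup Y_v$, so swapping $C$ changes only $m(y_q)$ and nothing on either fan. Crucially, the paper then confronts exactly the possibility your proposal assumes away: after this swap $X_u$ may \emph{still} be saturated, and handling that case requires five further intermediate colourings ($\alpha_2$ through $\alpha_6$), including a second auxiliary chain $C'$, two path inversions via Observation~\ref{obs:path}, and an inversion of a reordered, now non-saturated cycle at $v$ via Lemma~\ref{lem:loop}. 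To repair your proof you would need to replace the initial inversion of $Y_v$ with an operation that is unconditionally a Kempe change (such as the swap of $C$) and then supply the full case analysis for when saturation survives.
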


For comets, it suffices to allow one Kempe change outside of $X_u(\alpha)$:

\begin{lemma}\label{lem:comet}
For any vertex $u$ and comet $X_u(\alpha)=(ux_0,\ldots,ux_p)$ in $D_u(\alpha)$, we have $\alpha \leftrightsquigarrow \alpha' $, where $\alpha'$ satisfies $m_{\alpha'}(u)=\alpha(ux_0)$ and is obtained from $\alpha$ by changing the colour of some edges in $X_u(\alpha)$ and possibly swapping one component $C$ in $K(m(u),\alpha(ux_q))$, where $ux_q$ is the endpoint of the out-edge from $ux_p$ in $D_u(\alpha)$. 
\end{lemma}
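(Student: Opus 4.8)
Looking at Lemma~\ref{lem:comet}, we have a comet $X_u(\alpha)=(ux_0,\ldots,ux_p)$ in $D_u(\alpha)$: a directed path $ux_0\to ux_1\to\cdots\to ux_p$ together with an extra arc from $ux_p$ to some $ux_q$ with $0<q<p$. Translating back: each edge $ux_i$ has colour $\alpha(ux_i)$, and $m(x_i)=\alpha(ux_{i+1})$ for $i<p$, while $m(x_p)=\alpha(ux_q)$. The goal is to free colour $\alpha(ux_0)$ at $u$ (i.e.\ reach $\alpha'$ with $m_{\alpha'}(u)=\alpha(ux_0)$) by recolouring only edges of the form $ux_i$ plus possibly one Kempe swap in $K(m(u),\alpha(ux_q))$.

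My plan is as follows. First, consider the two chains $K_\alpha(m(u),\alpha(ux_q))$: let $C_u$ be the component containing $u$, and let $C_q$ be the component containing $x_q$ via the edge $ux_q$ (note $ux_q$ is coloured $\alpha(ux_q)$, so $x_q$ is an endpoint of this chain component; since $m(u)$ is missing at $u$, $u$ is an endpoint of its component $C_u$). I distinguish two cases according to whether $C_u=C_q$.

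\textbf{Case 1: $C_u\neq C_q$.} Then I swap the component $C_q$ in $K(m(u),\alpha(ux_q))$; this is the single allowed outside Kempe change. After the swap, the edge $ux_q$ changes from colour $\alpha(ux_q)$ to colour $m(u)$, colour $m(u)$ is now present at $u$ (so $m(u)$ is no longer missing there), but crucially colour $\alpha(ux_q)$ becomes missing at $x_q$ while $m(u)$ at $x_q$ is now occupied by $ux_q$. Now the ``tail'' of the comet, namely the sub-path from $ux_q$ down to... wait, I need to re-examine the direction: the path runs $ux_0\to\cdots\to ux_q\to\cdots\to ux_p$ and the back-arc is $ux_p\to ux_q$. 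After recolouring $ux_q$ to $m(u)$, the vertex $x_q$ now misses $\alpha(ux_q)$, which equals $m_\alpha(x_p)$. Then I can invert the path $(ux_0,\ldots,ux_q)$ in $D_u$: this is now a genuine path (the cyclic obstruction at $x_q$ has been removed), so by Observation~\ref{obs:path} (applied after the swap) this is a valid Kempe-equivalence, and it recolours $ux_0$ to... I should push colours along so that $m(u)$ was freed — actually the cleanest formulation: after swapping $C_q$, apply the path-inversion argument of Observation~\ref{obs:path} to the directed path ending where the missing colour now propagates, ultimately assigning $\alpha(ux_0)$ to... hmm, I need $m_{\alpha'}(u)=\alpha(ux_0)$, so I want to recolour $ux_0$ to something else and leave $\alpha(ux_0)$ unused at $u$. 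The chain of single-edge Kempe changes $ux_i\mapsto m(x_i)$ done from $i=q$ down to $i=0$ (each legal because at that moment $m(x_i)$ is missing at $x_i$ and, by the comet structure plus the swap, also missing at $u$ or handled as a length-one Kempe chain) frees $\alpha(ux_0)$ at $u$.

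\textbf{Case 2: $C_u=C_q$.} Now $u$ and $x_q$ lie on a common $(m(u),\alpha(ux_q))$-path. Here no outside swap is needed: I argue directly that inverting an appropriate truncation of the comet already works, because the path from $u$ to $x_q$ in the chain together with the back-arc structure lets the missing colour at $x_p$ ``close up'' consistently. Concretely, I invert $X_u(\alpha)$ treating it as the path $(ux_0,\ldots,ux_p)$: assign $m(x_i)$ to $ux_i$ for all $i$. The only conflict is at $x_q$, which would receive both colour $m(x_{q-1})=\alpha(ux_q)$ (old colour, from having $ux_{q-1}$ recoloured) — wait, it receives the new colour of $ux_q$, which is $m(x_q)$, and separately $ux_{q-1}$ becomes $m(x_{q-1})=\alpha(ux_q)$; meanwhile $ux_p$ becomes $m(x_p)=\alpha(ux_q)$ too, so edges $ux_{q-1}$ and $ux_p$ both get colour $\alpha(ux_q)$ — a conflict at $u$. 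The saturation-type hypothesis ($C_u=C_q$) is exactly what I will use to resolve this by a sequence of component swaps, analogous to the proof of Lemma~\ref{lem:loop}, performed entirely within colours on edges at $u$.

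\textbf{Main obstacle.} The hard part is Case 2 (or whichever sub-case has $u$ and $x_q$ on a common chain): I must show that, \emph{without} an outside swap, reshuffling the edge-colours at $u$ along the comet still realises a genuine sequence of Kempe changes — i.e.\ that each intermediate recolouring is either a length-one Kempe chain or a full component swap that does not disturb edges already fixed. This will require carefully ordering the single-edge recolourings (likely from the sink $ux_p$ backwards to $ux_q$, then from $ux_q$ backwards to $ux_0$) and checking at each step that the colour being introduced is genuinely missing at both endpoints of the edge, or else that the relevant bicoloured component is precisely a single edge; the interaction between the back-arc target $x_q$ and the chain $K(m(u),\alpha(ux_q))$ is where I expect the bookkeeping to be most delicate, and where triangle-freeness may again be invoked to ensure the relevant vertices are distinct.
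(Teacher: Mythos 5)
There is a genuine gap, and it starts with the case split. You define $C_u$ as the component of $K(m(u),\alpha(ux_q))$ containing $u$ and $C_q$ as the one containing $x_q$, and branch on whether they coincide. But the edge $ux_q$ is coloured $\alpha(ux_q)$, so it belongs to $K(m(u),\alpha(ux_q))$ and joins $u$ to $x_q$ inside that subgraph: $C_u=C_q$ always, and your Case~1 is vacuous. (Its internal description is also inconsistent: if $C_q$ really were a different component from the one containing $u$, swapping it could not recolour $ux_q$.) Relatedly, $x_q$ is not an endpoint of that chain: since $m(x_q)=\alpha(ux_{q+1})\notin\{m(u),\alpha(ux_q)\}$, the vertex $x_q$ has degree $2$ in $K(m(u),\alpha(ux_q))$. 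So all the work lands in your Case~2, which you explicitly leave unresolved (``the hard part\ldots will require carefully ordering the single-edge recolourings''), and the conflict you identify there --- $ux_{q-1}$ and $ux_p$ both wanting colour $\alpha(ux_q)$ --- is precisely the difficulty the lemma is about.

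The dichotomy that actually works (and is the one the paper uses) is about the \emph{other} endpoint of the component $C$ of $K(m(u),\alpha(ux_q))$ containing the edge $ux_q$. Since $u$ misses $m(u)$, it has degree $1$ in $C$, so $C$ is a path with $u$ as one endpoint; a swap of $C$ changes missing colours only at the two endpoints of $C$. Both $x_{q-1}$ and $x_p$ miss $\alpha(ux_q)$ in $\alpha$, and at most one of them can be the second endpoint of $C$, so after swapping $C$ (which makes $\alpha(ux_q)$ the missing colour at $u$) at least one of them still misses $\alpha(ux_q)$. If $x_{q-1}$ does, the fan $(ux_0,\ldots,ux_{q-1})$ is a path in $D_u(\alpha')$; otherwise $x_{q-1}$ now misses $m_\alpha(u)=\alpha'(ux_q)$, the arc $ux_{q-1}\to ux_q$ survives, and $(ux_0,\ldots,ux_p)$ is a path. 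Either way Observation~\ref{obs:path} finishes with $m_{\alpha'}(u)=\alpha(ux_0)$, using exactly one outside swap. Your proposal never isolates this endpoint argument, which is the missing idea.
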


In the colouring $\alpha'$ obtained from Lemma~\ref{lem:comet}, we stress the fact that the number of edges coloured $\alpha(ux_0)$ strictly decreases 
as the swapped component $C$ does not contain such a colour, and $m_{\alpha'}(u)=\alpha(ux_0)$, i.e., no edge incident to $u$ has colour $\alpha(ux_0)$ in $\alpha'$.

\medskip

We prove the lemmas by increasing difficulty in the following subsections.

\subsection*{Gentle introduction: a proof of Observation~\ref{obs:path}}

\begin{proof}[Proof of Observation~\ref{obs:path}] 
Let $X_u(\alpha)=(ux_0,\ldots,ux_p)$ be a path in $D_u(\alpha)$. 
Intuitively, we will start recolouring edges from the end of the path to its beginning.
Observe that since $X_u(\alpha)$ is a path, by construction of $D_u(\alpha)$ there is no edge incident to $u$ that has colour $m(x_p)$, hence $m(u)=m(x_p)$. 
We proceed by induction on $p$. 
When $p=0$, we have $m(x_0)=m(u)$, thus swapping the single-edge component of $K(\alpha(ux_0),m(u))$ containing $ux_0$ yields the desired colouring $X_u^{-1}(\alpha)$.

Similarly, for $p>0$, we swap the (single-edge) component of $K(\alpha(ux_p),m(u))$ containing $ux_p$, and denote by $\alpha'$ the resulting colouring. We note that in $D_u(\alpha')$, the sequence $X_{u}(\alpha',x_0)$ is exactly the path $(ux_0,\ldots,ux_{p-1})$. Moreover $X_{u}(\alpha',x_0)=X_u^{-1}(\alpha)$.
By induction we derive $\alpha' \leftrightsquigarrow X^{-1}_u(\alpha)$, hence $\alpha \leftrightsquigarrow X_u^{-1}(\alpha)$.
\end{proof}

\subsection*{Comets: a proof of Lemma~\ref{lem:comet}}

\begin{proof}[Proof of Lemma~\ref{lem:comet}]
Let $X_u(\alpha)=(ux_0,\dots,ux_p)$ be a comet in $D_u(\alpha)$, with $x_q$ the endpoint of the out-edge from $ux_p$ in $D_u(\alpha)$. 
Since $X_u(\alpha)$ is a comet, $0<q<p$. 
We swap the component $C$ of $K(m(u),\alpha(ux_q))$ containing the edge $ux_q$, and denote by $\alpha'$ the resulting colouring. In $\alpha$, we have $m_\alpha(x_p)=m_\alpha(x_{q-1})=\alpha(ux_q)$. Since $C$ must be a path, it contains at most two vertices (its endpoints) whose missing colour in $\alpha$ belongs to $\{m(u),\alpha(ux_q)\}$.
We know that $C$ already contains $u$, so at least one of $x_p$ and $x_{q-1}$ has the same missing colour in $\alpha$ and $\alpha'$. We distinguish the two cases.

\begin{itemize}
    \item Assume $m_{\alpha'}(x_{q-1})=\alpha(ux_q)$. Since $m_{\alpha'}(u)=\alpha(ux_q)$, it follows that in $D_u(\alpha')$, the sequence $X_u(\alpha',x_0)$ is exactly $(ux_0,\ldots,ux_{q-1})$, which induces a path. We then conclude by Observation~\ref{obs:path}.
    \item If not, $m_{\alpha'}(x_{q-1})=\alpha'(ux_q)$, and $m_{\alpha'}(x_p)=\alpha(ux_q)$. Since $m_{\alpha'}(u)=\alpha(ux_q)$, it follows that in $D_u(\alpha')$, the sequence $X_u(\alpha',x_0)$ is exactly $(ux_0,\ldots,ux_{p})$, which induces a path. We then conclude by Observation~\ref{obs:path}.
\end{itemize}%
\end{proof}
\subsection*{Non-saturated cycles: a proof of Lemma~\ref{lem:loop}}

\begin{figure}
\centering

\begin{subfigure}[t]{0.45\textwidth}
\centering
\begin{tikzpicture}[node distance=7mm and 7mm, auto]
 nodes
  \node[punkt,label={-90:$u$}](u){1};
  \node[punkt, above right=of u, label={-45:$x_0$}](x1){3}
  	edge[pil] node {2}(u);
  \node[punkt, below right=of u, label={-45:$x_1$}](x2){4}
  	edge[pil] node {3}(u);
  \node[punkt, below left=of u, label={-135:$x_2$}](x3){5}
  	edge[pil] node {4}(u);
  \node[punkt, above left=of u, label={-135:$x_3$}](x4){2}
  	edge[pil] node {5}(u);
  	
 \node[left = of x4,rectangle, draw=black] {1-2 chain}
   edge[pil,above] node {1} (x4);
\end{tikzpicture}
\caption{Colouring $\alpha$}\label{fig:nonsat-cyclea}
\end{subfigure} 
\hfill
\begin{subfigure}[t]{0.45\textwidth}
\centering
\begin{tikzpicture}[node distance=7mm and 7mm, auto]
 nodes
  \node[punkt,label={-90:$u$}](u){1};
  \node[punkt, above right=of u, label={-45:$x_0$}](x1){3}
  	edge[pil] node {2}(u);
  \node[punkt, below right=of u, label={-45:$x_1$}](x2){4}
  	edge[pil] node {3}(u);
  \node[punkt, below left=of u, label={-135:$x_2$}](x3){5}
  	edge[pil] node {4}(u);
  \node[punkt, above left=of u, label={-135:$x_3$}](x4){1}
  	edge[pil] node {5}(u);
  	
 \node[left = of x4,rectangle, draw=black] {2-1 chain}
   edge[pil,above] node {2} (x4);
\end{tikzpicture}
\caption{Colouring $\alpha'$ }\label{fig:nonsat-cycleb}
\end{subfigure} \hfill
\vspace{5mm}

\begin{subfigure}[t]{0.45\textwidth} 
\centering
\begin{tikzpicture}[node distance=7mm and 7mm, auto]
 nodes
  \node[punkt,label={-90:$u$}](u){2};
  \node[punkt, above right=of u, label={-45:$x_0$}](x1){2}
  	edge[pil] node {3}(u);
  \node[punkt, below right=of u, label={-45:$x_1$}](x2){3}
  	edge[pil] node {4}(u);
  \node[punkt, below left=of u, label={-135:$x_2$}](x3){4}
  	edge[pil] node {5}(u);
  \node[punkt, above left=of u, label={-135:$x_3$}](x4){5}
  	edge[pil] node {1}(u);
  	
 \node[left = of x4,rectangle, draw=black] {2-1 chain}
   edge[pil,above] node {2} (x4);
\end{tikzpicture}
\caption{Colouring $\alpha''=X_{u}^{-1}(\alpha',x_0)$}\label{fig:nonsat-cyclec}
\end{subfigure}
\hfill
\begin{subfigure}[t]{0.45\textwidth} 
\centering
\hspace{-1.45cm}
\begin{tikzpicture}[node distance=7mm and 7mm, auto]
 nodes
  \node[punkt,label={-90:$u$}](u){1};
  \node[punkt, above right=of u, label={-45:$x_0$}](x1){1}
  	edge[pil] node {3}(u);
  \node[punkt, below right=of u, label={-45:$x_1$}](x2){3}
  	edge[pil] node {4}(u);
  \node[punkt, below left=of u, label={-135:$x_2$}](x3){4}
  	edge[pil] node {5}(u);
  \node[punkt, above left=of u, label={-135:$x_3$}](x4){5}
  	edge[pil] node {2}(u);
  	
 \node[left = of x4,rectangle, draw=black] {1-2 chain}
   edge[pil,above] node {1} (x4);
\end{tikzpicture}
\caption{Colouring $X_{u}^{-1}(\alpha,x_0)$}\label{fig:nonsat-cycled}
\end{subfigure} \hfill

\caption{Colorings $\alpha \leftrightsquigarrow \alpha' \leftrightsquigarrow X_{u}^{-1}(\alpha',x_0) \leftrightsquigarrow X_{u}^{-1}(\alpha,x_0)$.}
\label{fig:nonsat-cycle}
\end{figure}
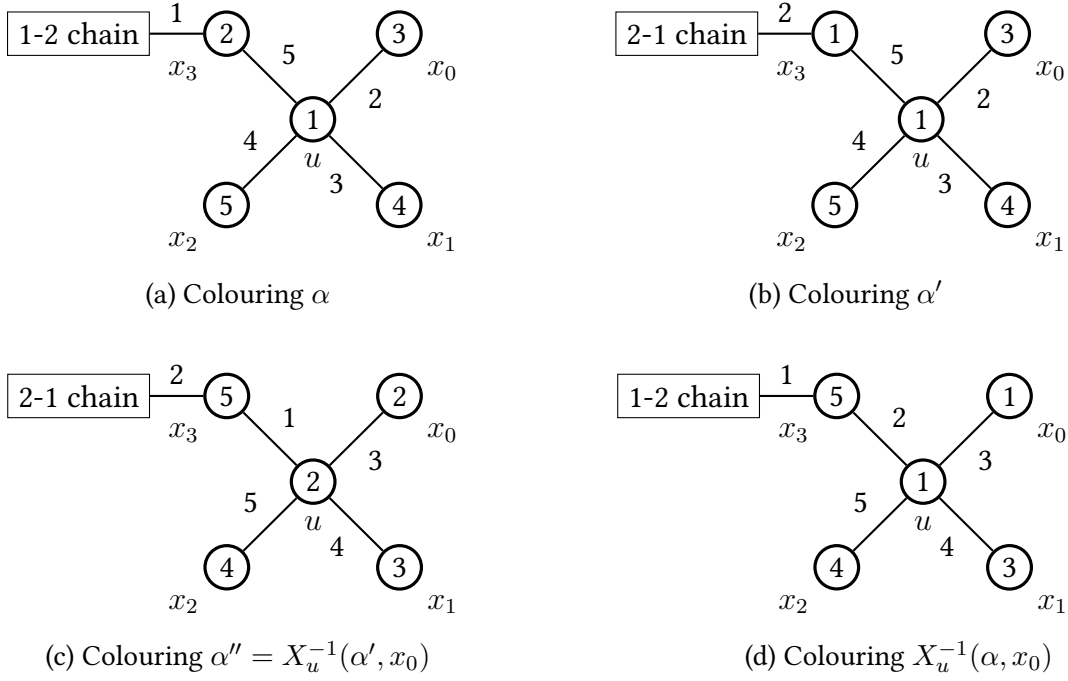

\begin{proof}[Proof of Lemma~\ref{lem:loop}] 
Let $X_u(\alpha)=(ux_0,\dots,ux_p)$ be a non-saturated cycle in $D_u(\alpha)$.
Without loss of generality since $X_u(\alpha)$ induces a cycle that 
is not saturated,
we can assume that the component of $K(\alpha(ux_0),m(u))$ containing $u$ does not contain an edge incident with $x_{p}$. 
By definition of $D_u(\alpha)$, we have $m(x_i)=\alpha(ux_{i+1})$ for every $0\leq i<p$, and as illustrated on Figure~\ref{fig:nonsat-cyclea}
$m(x_p)=\alpha(ux_0)$. We consider the colouring $ \alpha'$ obtained from $\alpha$ by swapping the component $C$ of $K(\alpha(ux_0),m(u))$ containing $x_p$ ($C$ is referred to as a 1-2 chain on Figure~\ref{fig:nonsat-cyclea}, see Figure~\ref{fig:nonsat-cycleb} for the resulting colouring). 
By assumption, this has no impact on the colours of the edges incident with $u$, and $m_\alpha(x_i)=m_{\alpha'}(x_i)$ for every $0\leq i<p$, as well as $m_\alpha(u)=m_{\alpha'}(u)$. 
Note however that $m_{\alpha'}(x_p)=m_\alpha(u)$. 
In the colouring $\alpha'$, $X_u(\alpha',x_0) = (ux_0,...,ux_p)$ is a path, thus by Observation~\ref{obs:path}, $\alpha' \leftrightsquigarrow X_u^{-1}(\alpha',x_0)$; we denote this resulting colouring by $\alpha''$ (see Figure~\ref{fig:nonsat-cyclec}). 
In the colouring $\alpha''$, let $C'$ be the component of $K(\alpha(ux_0),m_{\alpha}(u))$ containing $x_p$. We have that $C'=C\cup \{ux_p\}$, and that $m_{\alpha''}(u) = \alpha(ux_0)$, so it suffices to swap $C'$ to obtain $X_u^{-1}(\alpha)$ as illustrated on Figure~\ref{fig:nonsat-cycled}. Hence $\alpha \leftrightsquigarrow \alpha' \leftrightsquigarrow X_u^{-1}(\alpha',x_0) \leftrightsquigarrow X_u^{-1}(\alpha)$, as desired.
\end{proof}

\subsection*{Double cycles: a proof of Lemma~\ref{lem:doubleloop}}

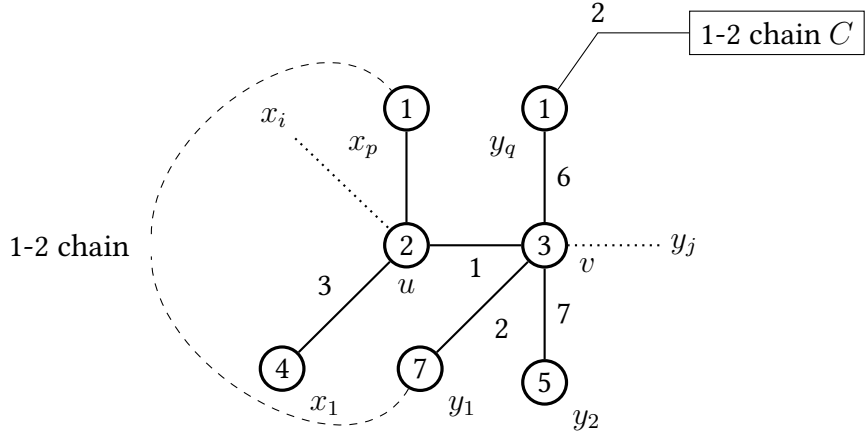
\begin{figure}
\centering
\hspace{-3cm}


%
\centering

\begin{tikzpicture}[node distance=12mm and 12mm, auto]
 nodes
  \node[punkt,label={-90:$u$}](u){2};
  \node[punkt,label={-4:$v$},right=of u](v){3}
  edge[pil] node {1} (u);
  \node[punkt, above=of u, label={-135:$x_p$}](xp){1}
  	edge[pil] (u);
  \node[punkt, above=of v, label={-135:$y_q$}](yq){1}
  	edge[pil] node {6} (v)  ;
  \node[punkt, below left =of v,  label={-45:$y_1$}](yi){7}
  	edge[pil, below right] node {2} (v);
  \node[left=29mm of u, label={180:1-2 chain}](ch){}; 
  \draw[dashed](yi)to[out=-120,in=-90](ch)to[out=90,in=130](xp);

  \node[coordinate,shift={(.7,1)}, label={2}](brisure) at (yq) {2};
  \node[right = of brisure, rectangle, draw=black](C){1-2 chain $C$};
   \draw[-](yq)--(brisure)|-(C);
   \node[punkt, below=of v, label={-45:$y_2$}](y2){5}
  	edge[pil]  node[right] {7} (v);
  	\node[punkt, below left=of u, label={-45:$x_1$}](x1){4}
  	edge[pil]  node{3}(u);
  	\node[ above left=of u, ](xi){$x_i$}
  	edge[pil, dotted]  (u);
  	\node[ right=of v, ](yj){$y_j$}
  	edge[pil, dotted]  (v);
\end{tikzpicture}

\caption{Double cycles: Illustration of the beginning of the proof of Lemma~\ref{lem:doubleloop}: In colouring~$\alpha$, the vertex $y_q$ is in a different component of $K(\alpha(uv),m(u))$ than $u$ and $x_p$.}
\label{fig:double-cycle-begin}
\end{figure}

\begin{figure}
\begin{subfigure}[t]{0.45\textwidth}
\centering

\begin{tikzpicture}[node distance=12mm and 12mm, auto]
 nodes
  \node[punkt,label={-90:$u$}](u){2};
  \node[punkt,label={-4:$v$},right=of u](v){3}
  edge[pil] node {1} (u);
  \node[punkt, above=of u, label={-135:$x_p$}](xp){1}
  	edge[pil] (u);
  \node[punkt, above=of v, label={-135:$y_q$}](yq){2}
  	edge[pil] node {6} (v)  ;
  \node[punkt, below left =of v,  label={-45:$y_1$}](yi){7}
  	edge[pil, below right] node {2} (v);
  \node[coordinate,shift={(.7,1)}, label={1}](brisure) at (yq) {1};
  \node[right = of brisure, rectangle, draw=black](C){2-1 chain $C$};
   \draw[-](yq)--(brisure)|-(C);
   \node[right = of yq, rectangle, draw=black](Cp){2-3 chain $C'$}
   edge[pil] node {3} (yq);
   \node[punkt, below=of v, label={-45:$y_2$}](y2){5}
  	edge[pil]  node[right] {7} (v);
  	\node[punkt, below left=of u, label={-45:$x_1$}](x1){4}
  	edge[pil]  node{3}(u);
  	\node[ above left=of u, ](xi){$x_i$}
  	edge[pil, dotted]  (u);
  	\node[ right=of v, ](yj){$y_j$}
  	edge[pil, dotted]  (v);
\end{tikzpicture}
\subcaption{$\alpha_1$}
\end{subfigure}
\hspace{0.07\textwidth}
\begin{subfigure}[t]{0.45\textwidth}
\centering

\begin{tikzpicture}[node distance=12mm and 12mm, auto]
 nodes
  \node[punkt,label={-90:$u$}](u){2};
  \node[punkt,label={-4:$v$},right=of u](v){3}
  edge[pil] node {1} (u);
  \node[punkt, above=of u, label={-135:$x_p$}](xp){1}
  	edge[pil] (u);
  \node[punkt, above=of v, label={-135:$y_q$}](yq){3}
  	edge[pil] node {6} (v)  ;
  \node[punkt, below left =of v,  label={-45:$y_1$}](yi){7}
  	edge[pil, below right] node {2} (v);
  \node[coordinate,shift={(.7,1)}, label={1}](brisure) at (yq) {1};
  \node[right = of brisure, rectangle, draw=black](C){2-1 chain $C$};
   \draw[-](yq)--(brisure)|-(C);
   \node[right = of yq, rectangle, draw=black](Cp){3-2 chain $C'$}
   edge[pil] node {2} (yq);
   \node[punkt, below=of v, label={-45:$y_2$}](y2){5}
  	edge[pil]  node[right] {7} (v);
  	\node[punkt, below left=of u, label={-45:$x_1$}](x1){4}
  	edge[pil]  node{3}(u);
  	\node[ above left=of u, ](xi){$x_i$}
  	edge[pil, dotted]  (u);
  	\node[ right=of v, ](yj){$y_j$}
  	edge[pil, dotted]  (v);
\end{tikzpicture}
\subcaption{$\alpha_2$}
\end{subfigure}

\begin{subfigure}[t]{0.45\textwidth}
\centering
\begin{tikzpicture}[node distance=12mm and 12mm, auto]
 nodes
  \node[punkt,label={-90:$u$}](u){1};
  \node[punkt,label={-4:$v$},right=of u](v){1}
  edge[pil] node {2} (u);
  \node[punkt, above=of u, label={-135:$x_p$}](xp){1}
  	edge[pil] (u);
  \node[punkt, above=of v, label={-135:$y_q$}](yq){6}
  	edge[pil] node {3} (v)  ;
  \node[punkt, below left =of v,  label={-45:$y_1$}](yi){2}
  	edge[pil, below right] node {7} (v);
  \node[coordinate,shift={(.7,1)}, label={1}](brisure) at (yq) {1};
  \node[right = of brisure, rectangle, draw=black](C){2-1 chain $C$};
   \draw[-](yq)--(brisure)|-(C);
   \node[right = of yq, rectangle, draw=black](Cp){3-2 chain $C'$}
   edge[pil] node {2} (yq);
   \node[punkt, below=of v, label={-45:$y_2$}](y2){7}
  	edge[pil]  node[right] {5} (v);
  	\node[punkt, below left=of u, label={-45:$x_1$}](x1){4}
  	edge[pil]  node{3} (u);
  	\node[ above left=of u, ](xi){$x_i$}
  	edge[pil, dotted]  (u);
  	\node[ right=of v, ](yj){$y_j$}
  	edge[pil, dotted]  (v);
\end{tikzpicture}
\subcaption{$\alpha_3$}
\end{subfigure}
\hspace{0.07\textwidth}
\begin{subfigure}[t]{0.45\textwidth}
\centering
\begin{tikzpicture}[node distance=12mm and 12mm, auto]
 nodes
  \node[punkt,label={-90:$u$}](u){3};
  \node[punkt,label={-4:$v$},right=of u](v){1}
  edge[pil] node {2} (u);
  \node[punkt, above=of u, label={-135:$x_p$}](xp){{\color{white} 8}}
  	edge[pil] node {1} (u);
  \node[punkt, above=of v, label={-135:$y_q$}](yq){6}
  	edge[pil] node {3} (v)  ;
  \node[punkt, below left =of v,  label={-45:$y_1$}](yi){2}
  	edge[pil, below right] node {7} (v);
  \node[coordinate,shift={(.7,1)}, label={1}](brisure) at (yq) {1};
  \node[right = of brisure, rectangle, draw=black](C){2-1 chain $C$};
   \draw[-](yq)--(brisure)|-(C);
   \node[right = of yq, rectangle, draw=black](Cp){3-2 chain $C'$}
   edge[pil] node {2} (yq);
   \node[punkt, below=of v, label={-45:$y_2$}](y2){7}
  	edge[pil]  node[right] {5} (v);
  	\node[punkt, below left=of u, label={-45:$x_1$}](x1){3}
  	edge[pil]  node{4} (u);
  	\node[ above left=of u, ](xi){$x_i$}
  	edge[pil, dotted]  (u);
  	\node[ right=of v, ](yj){$y_j$}
  	edge[pil, dotted]  (v);
\end{tikzpicture}
\subcaption{$\alpha_4$}
\end{subfigure}

\begin{subfigure}[t]{0.45\textwidth}
\centering
\begin{tikzpicture}[node distance=12mm and 12mm, auto]
 nodes
  \node[punkt,label={-90:$u$}](u){2};
  \node[punkt,label={-4:$v$},right=of u](v){1}
  edge[pil] node {3} (u);
  \node[punkt, above=of u, label={-135:$x_p$}](xp){{\color{white} 8}}
  	edge[pil] node {1} (u);
  \node[punkt, above=of v, label={-135:$y_q$}](yq){6}
  	edge[pil] node {2} (v)  ;
  \node[punkt, below left =of v,  label={-45:$y_1$}](yi){2}
  	edge[pil, below right] node {7} (v);
  \node[coordinate,shift={(.7,1)}, label={1}](brisure) at (yq) {1};
  \node[right = of brisure, rectangle, draw=black](C){2-1 chain $C$};
   \draw[-](yq)--(brisure)|-(C);
   \node[right = of yq, rectangle, draw=black](Cp){2-3 chain $C'$}
   edge[pil] node {3} (yq);
   \node[punkt, below=of v, label={-45:$y_2$}](y2){7}
  	edge[pil]  node[right] {5} (v);
  	\node[punkt, below left=of u, label={-45:$x_1$}](x1){3}
  	edge[pil]  node{4} (u);
  	\node[ above left=of u, ](xi){$x_i$}
  	edge[pil, dotted]  (u);
  	\node[ right=of v, ](yj){$y_j$}
  	edge[pil, dotted]  (v);
\end{tikzpicture}
\subcaption{$\alpha_5$}
\end{subfigure}
\hspace{0.07\textwidth}
\begin{subfigure}[t]{0.45\textwidth}
\centering
\begin{tikzpicture}[node distance=12mm and 12mm, auto]
 nodes
  \node[punkt,label={-90:$u$}](u){2};
  \node[punkt,label={-4:$v$},right=of u](v){1}
  edge[pil] node {3} (u);
  \node[punkt, above=of u, label={-135:$x_p$}](xp){{\color{white} 8}}
  	edge[pil] node {1} (u);
  \node[punkt, above=of v, label={-135:$y_q$}](yq){2}
  	edge[pil] node {6} (v)  ;
  \node[punkt, below left =of v,  label={-45:$y_1$}](yi){7}
  	edge[pil, below right] node {2} (v);
  \node[coordinate,shift={(.7,1)}, label={1}](brisure) at (yq) {1};
  \node[right = of brisure, rectangle, draw=black](C){2-1 chain $C$};
   \draw[-](yq)--(brisure)|-(C);
   \node[right = of yq, rectangle, draw=black](Cp){2-3 chain $C'$}
   edge[pil] node {3} (yq);
   \node[punkt, below=of v, label={-45:$y_2$}](y2){5}
  	edge[pil]  node[right] {7} (v);
  	\node[punkt, below left=of u, label={-45:$x_1$}](x1){3}
  	edge[pil]  node{4} (u);
  	\node[ above left=of u, ](xi){$x_i$}
  	edge[pil, dotted]  (u);
  	\node[ right=of v, ](yj){$y_j$}
  	edge[pil, dotted]  (v);
\end{tikzpicture}
\subcaption{$\alpha_6$}
\end{subfigure}

\caption{Double cycles : illustration of the intermediate colouring in the proof of Lemma~\ref{lem:doubleloop}.}
\label{fig:double-cycle-end}
\end{figure}
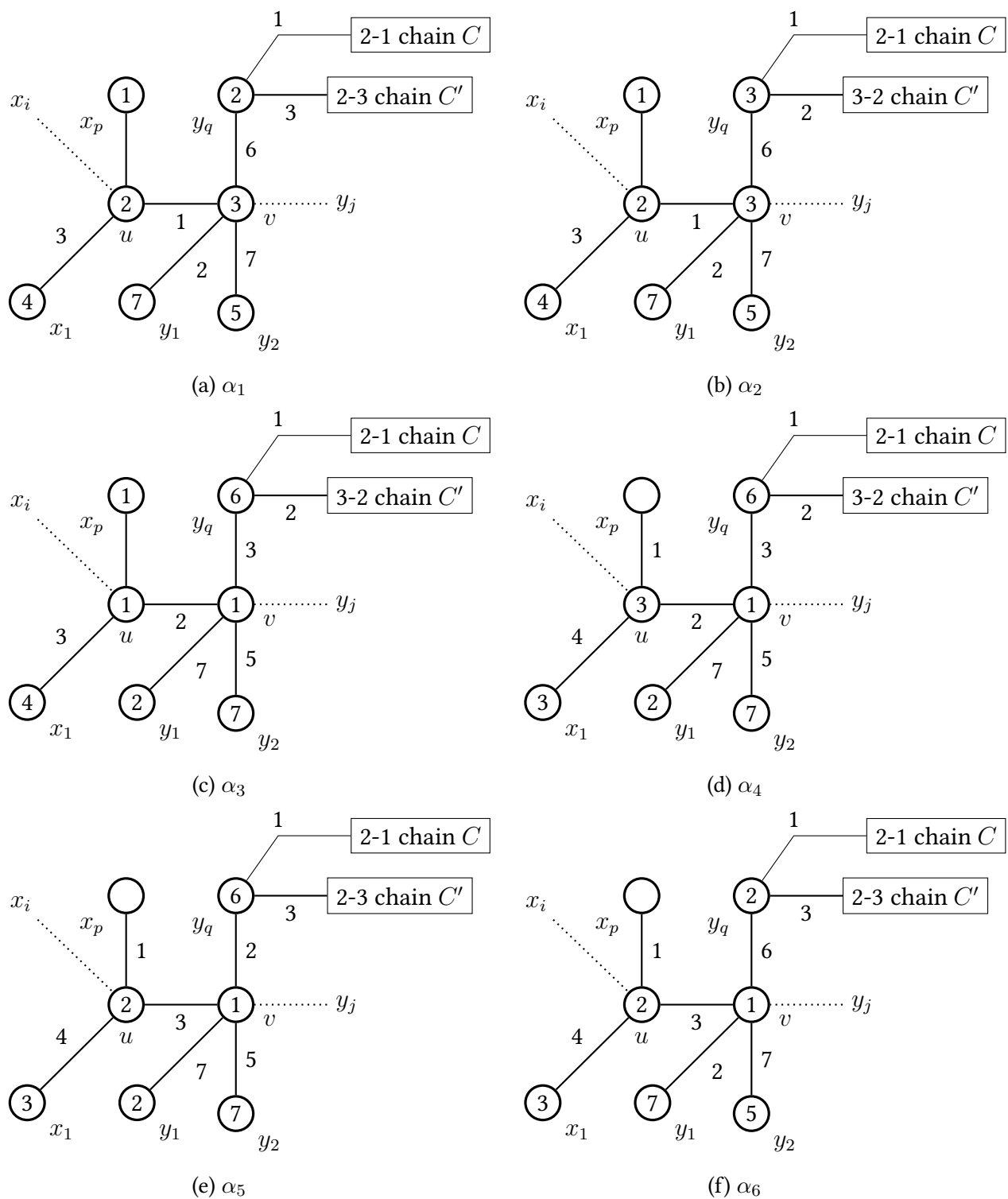

\begin{proof}[Proof of Lemma~\ref{lem:doubleloop}]
Let $X_u=(uv,ux_1,\dots,ux_p)$ be a saturated cycle in $D_u(\alpha)$, and $Y_v=(vu,vy_1,\ldots,vy_q)$ be a cycle in $D_v(\alpha)$. 
Observe that $m(v) \neq m(u)$, otherwise $X_v$ and $Y_u$ contain only the edge $uv$ and thus do not induce cycles. Note that $m(x_p)=m(y_q)=\alpha(uv)$ and by triangle-freeness $x_p\neq y_q$.

Figure~\ref{fig:double-cycle-begin}
illustrates the following argument. Since $X_u$ is saturated, the component of $K(\alpha(uv),m(u))$ containing $u$ also contains $x_p$, and thus does not contain $y_q$. 
In particular, it follows that $q \geq 2$, since by definition $\alpha(vy_1)=m(u)$ and thus $y_1$ is in the same component of $K(\alpha(uv),m(u))$ as $u$ and $x_p$, while $m(y_q)=\alpha(uv)$. 

Let $C$ be the component of $K(\alpha(uv),m(u))$ containing $y_q$. 
We note that $C$ and $X_u \cup Y_v$ are disjoint, and that neither endpoint of $C$ is incident to an edge of $X_u \cup Y_v \setminus \{vy_q\}$, as the only vertices missing colours $\alpha(uv)$ or $m_\alpha(u)$ in $X_u \cup Y_v$ are by definition $u$, $x_p$, and $y_p$, since $X_u$ and $Y_v$ induce cycles.
We consider the colouring $\alpha_{1}$ obtained from $\alpha$ by swapping $C$ (see Figure~\ref{fig:double-cycle-end}
for all the intermediate colourings used in this proof). For every $x_i$, we have $\alpha(ux_i)=\alpha_{1}(ux_i)$ and $m_\alpha(x_i)=m_{\alpha_{1}}(x_i)$; similarly for $u,v$, and every $y_j$ with $1\leq j < q$.

The sequence $X_u$ is also a cycle-inducing sequence of vertices that can be reached from $uv$ in $D_u(\alpha_{1})$. However,  $X_u$ may not saturated in $\alpha_{1}$. We distinguish the two cases. 

\begin{itemize}
    \item Assume that $X_u$ is not saturated in $\alpha_{1}$. By Lemma~\ref{lem:loop}, we have $\alpha_{1} \leftrightsquigarrow X_u^{-1}(\alpha_{1})$. By swapping $C$ for the second time (remember that $C$ and $X_u$ are disjoint, and that neither endpoint of $C$ is incident to an edge of $X_u$), we obtain $X_u^{-1}(\alpha)$, hence the conclusion.
    
    \item Assume now that $X_u$ is saturated in $\alpha_{1}$. 
    Hence the component of $K(m_{\alpha_{1}}(u),m_{\alpha_{1}}(v))$ containing $u$ also contains $v$ thus does not contain $y_q$, since $m_{\alpha_{1}}(y_q)=m_{\alpha_{1}}(u)$.
    
    Let $C'$ be the component of $K(m_{\alpha_{1}}(u),m_{\alpha_{1}}(v))$ containing $y_q$. Similarly as for $C$, we note that $C'$ and $X_u \cup Y_v$ are disjoint, and that neither endpoint of $C'$ is incident to an edge of $X_u \cup Y_v \setminus \{vy_q\}$. We consider the colouring $\alpha_{2}$ obtained from $\alpha_{1}$ by swapping $C'$. In $D_v(\alpha_{2})$, the sequence $(vu,vy_1,\ldots,vy_q)$ is the sequence of vertices of $D_v(\alpha_{2})$ that can be reached from $uv$, and it induces a path. 
    Let $\alpha_3=(uv,vy_1,\ldots,vy_q)^{-1}(\alpha_{2})$.
    By Observation~\ref{obs:path}, we have $\alpha_{2} \leftrightsquigarrow \alpha_3$. 
    Note that $\alpha_3$ assigns the colour $\alpha(uv)$ to no edge in $X_u \cup Y_v$. 
    In $D_u(\alpha_3)$, the sequence $(ux_1,\ldots,ux_p)$ is the sequence of vertices that can be reached from $ux_1$, and it induces a path.
    Let $\alpha_4$ be the colouring $(ux_1,\ldots,ux_p)^{-1}(\alpha_3)$. 
    By Observation~\ref{obs:path}, we have $\alpha_3 \leftrightsquigarrow \alpha_4$. 
    Note that in $\alpha_{4}$, we have $m_{\alpha_4}(v)=\alpha(uv)$ and $m_{\alpha_4}(u)=m_\alpha(v)$,
    with $\alpha_{4}(uv)=m_\alpha(u)$. 
    Note that there is a unique connected component of $K(m_\alpha(u),m_\alpha(v))$ containing vertices of $C'$, which is precisely $C' \cup \{uv,vy_q\}$.
    
    In the colouring $\alpha_{5}$ obtained from $\alpha_{4}$ by swapping $C' \cup \{uv,vy_q\}$, there is a unique component of $K(\alpha(uv),m_\alpha(u))$ containing vertices of $C$, which is precisely $C \cup \{vy_q\}$. 
    Moreover, in the colouring $\alpha_{5}$, the sequence $(vy_1, vy_q, vy_{q-1}, \ldots, vy_2)$ induces a cycle in $D_v$.
    The cycle is not saturated since the component of $K(\alpha(uv),m_\alpha(u))$ containing vertices of $C$ is precisely $C \cup \{vy_q\}$: since $q\geq 2$, it does not contain $y_1$. 
    We consider the colouring $\alpha_{6}$ obtained from $\alpha_{5}$ by inverting $(vy_1, vy_q, vy_{q-1}, \ldots, vy_2)$. 
    By Lemma~\ref{lem:loop}, we obtain $\alpha_{5} \leftrightsquigarrow \alpha_{6}$. 
    Note that in $\alpha_6$, the component of $K(\alpha(uv),m_\alpha(u))$ containing vertices of $C$ is precisely $C$: we swap it and obtain $\alpha \leftrightsquigarrow X_u^{-1}(\alpha)$, as desired.
\end{itemize}

\end{proof}
    
\section{The good, the bad and the ugly (edges)}

We essentially follow the outline of~\cite{casselgren}, and proceed by induction on $\Delta$. Given a $\Delta$-regular triangle-free graph $G$ that is $\Delta$-edge-colourable, we consider a $(\Delta+1)$-edge-colouring $\alpha$ and a target $\Delta$-edge colouring $\gamma$. Let $M$ be a colour class of $\gamma$, and note that $M$ is a perfect matching. We fix a colour out of $\{1,2,\ldots,\Delta+1\}$, say $1$, 
and try, through Kempe changes from $\alpha$, to assign the colour $1$ to every edge in $M$. If we succeed, we can delete $M$ and proceed by induction on $G \setminus M$ with colours $\{2,\ldots,\Delta+1\}$, noting that $\gamma$ restricted to $G \setminus M$ uses only $(\Delta(G)-1)$ colours. Let us introduce some terminology to quantify how close we are to this goal of assigning the colour $1$ to every edge in $M$.

In a given colouring, we say an edge is:
\begin{itemize}
    \item \emph{good} if it belongs to $M$ and is coloured $1$.
    \item \emph{bad} if it belongs to $M$ but is not coloured $1$.
    \item \emph{ugly} if it does not belong to $M$ but is coloured $1$.
\end{itemize}

Throughout the proof, we consider exclusively $(\Delta+1)$-colourings that can be reached from $\alpha$ through a series of Kempe changes: let us denote by $\mathcal{C}$ all such colourings. We define an order on $\mathcal{C}$ and we will prove that, in any minimal colouring, all edges of the perfect matching $M$ are coloured 1.

\begin{definition}
A colouring in $\mathcal{C}$ is \emph{minimal} if it has the fewest bad edges among all colourings in $\mathcal{C}$, and among those with the fewest bad edges, has the fewest ugly edges.
\end{definition}

Note that there may be many minimal colourings. If $m(u)=1$, we say the vertex $u$ is \emph{free}. 

\begin{lemma}\label{lem:cycleseverywhere}
In a minimal colouring, every ugly edge $vw$ is such that the sequence of vertices of $D_v$ reached from $vw$ induces a cycle.
\end{lemma}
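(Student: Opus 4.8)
The plan is to argue by contradiction: suppose $\alpha$ is a minimal colouring and $vw$ is an ugly edge such that the sequence $X_v(\alpha,w)$ does \emph{not} induce a cycle in $D_v(\alpha)$. Then $X_v(\alpha,w)$ induces either a path or a comet, and in both cases I want to produce a colouring in $\mathcal{C}$ that is strictly smaller in the minimal order, i.e.\ has fewer bad edges, or the same number of bad edges and fewer ugly edges. The key observation to exploit is that $vw$ is ugly, so $\alpha(vw)=1$ but $vw\notin M$; the goal is to recolour $vw$ to something other than $1$ without creating a new bad edge and without increasing the ugly count, which would kill the ugly edge $vw$ and contradict minimality.

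First, in the \textbf{path case}: $X_v(\alpha,w)$ induces a path $(vw=vx_0,\ldots,vx_p)$ in $D_v(\alpha)$, so as in the proof of Observation~\ref{obs:path} we have $m_\alpha(x_p)=m_\alpha(v)$, and by Observation~\ref{obs:path} we get $\alpha\leftrightsquigarrow X_v^{-1}(\alpha,w)$. In $\beta\eqdef X_v^{-1}(\alpha,w)$ the edge $vw$ receives colour $m_\alpha(x_0)$, and every edge $vx_i$ receives the colour $m_\alpha(x_i)$, which is the old colour of $vx_{i+1}$ (and $vx_p$ gets $m_\alpha(v)$). Since $\alpha(vw)=1$ and all of $vx_1,\ldots,vx_p$ had colours different from $1$ (the colour $1$ appears on $vw=vx_0$ only, among edges at $v$), the colour $1$ is not created on any edge of $X_v(\alpha,w)$: indeed $m_\alpha(v)\neq 1$ would be needed for $vx_p$, but $m_\alpha(v)=1$ would mean $v$ is free, and then the out-edge structure forces $x_0$ to be... here I need to check that $m_\alpha(v)\ne 1$, or handle the free case separately. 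Granting $m_\alpha(v)\neq 1$, no edge at $v$ is recoloured to $1$, so no new ugly or bad edge is created at $v$; since only edges incident to $v$ change colour, the bad/ugly counts do not increase, and the ugly edge $vw$ is destroyed (it is now coloured $m_\alpha(x_0)\ne 1$) while $M$ is untouched, so the number of bad edges is unchanged and the number of ugly edges strictly drops --- contradicting minimality of $\alpha$.

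Second, in the \textbf{comet case}: $X_v(\alpha,w)$ induces a comet, and Lemma~\ref{lem:comet} gives $\alpha\leftrightsquigarrow\alpha'$ where $m_{\alpha'}(v)=\alpha(vw)=1$, obtained by recolouring some edges in $X_v(\alpha,w)$ and possibly swapping one component $C$ of $K(m_\alpha(v),\alpha(vx_q))$. By the remark following Lemma~\ref{lem:comet}, the number of edges coloured $1$ strictly decreases (the swapped component $C$ contains no edge coloured $\alpha(vx_0)=1$, and no edge at $v$ is coloured $1$ in $\alpha'$). The only delicate point is bookkeeping on $M$: the edges whose colour changes are edges incident to $v$ together with the edges of $C$; I must check that none of these enters or leaves $M$ in a way that creates a bad edge --- but a bad edge is an edge of $M$ \emph{not} coloured $1$, and the operation only \emph{removes} occurrences of colour $1$, so no edge of $M$ that was coloured $1$ (good) stays in $M$ while losing colour $1$ \emph{unless} it was one of the recoloured edges; one checks that the recoloured edges at $v$ and in $C$ were not coloured $1$ except possibly $vw$ itself, which is ugly, not in $M$. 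Hence the number of bad edges does not increase, while the total number of edges coloured $1$ strictly decreases; in particular the number of good edges does not increase and the number of ugly edges strictly decreases (good $+$ ugly $=$ number of edges coloured $1$, and good cannot go up), so $\alpha'$ is strictly smaller than $\alpha$ in the minimal order, again a contradiction.

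The \textbf{main obstacle} I anticipate is the careful case distinction around the free vertex / the value $m_\alpha(v)$ in the path case, and the precise accounting in the comet case showing that the strict decrease in the number of $1$-coloured edges translates into a strict decrease in ugly edges without any increase in bad edges --- i.e.\ verifying that the component $C$ swapped in Lemma~\ref{lem:comet} and the recoloured edges at $v$ interact correctly with the matching $M$. Once that bookkeeping is pinned down, both cases close by the minimality of $\alpha$, so the only surviving possibility is that $X_v(\alpha,w)$ induces a cycle.
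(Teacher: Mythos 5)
Your proposal follows the paper's proof essentially verbatim: argue by contradiction, handle the path case via Observation~\ref{obs:path} and the comet case via Lemma~\ref{lem:comet}, and check that no recoloured edge receives colour $1$ so that the bad count is preserved while the ugly count drops. The one point you flagged as open, namely $m_\alpha(v)\neq 1$, is immediate because the ugly edge $vw$ is incident to $v$ and coloured $1$, so $1$ cannot be the missing colour at $v$ (this also gives $m_\alpha(x_i)\neq 1$ for every $i$, since each $m_\alpha(x_i)$ equals either the colour of some $vx_{i+1}\neq vw$ or $m_\alpha(v)$); with that supplied, both cases close exactly as you describe.
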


\begin{proof}
We consider a minimal colouring $\beta$, and denote by $X_v(w)=(vw,vx_1,\ldots,vx_p)$ the sequence of vertices of $D_v(\beta)$ reached from $vw$. Suppose by contradiction that $X_v(w)$ does not induce a cycle. 
The simple yet key observation is that for every $i$, $m(x_i)\neq 1$. 

If $X_v(w)$ induces a path, we conclude immediately using Observation~\ref{obs:path}, as $X_v^{-1}(\beta,w)$ has the same number of bad edge as $\beta$, and one fewer ugly edge.

Therefore, it suffices to consider the case where $X_v(w)$ induces a comet. We let $q$ be such that $vx_p$ has an out-edge to $vx_q$ in $D_v$. 
In addition to $m(x_i)\neq 1$ for every $1\leq i\leq p$, note that $m(v)\neq 1$, as $\beta(vw)=1$. 
The colouring $\beta'$ obtained from Lemma~\ref{lem:comet} has therefore the same number of bad edges as $\beta$, and fewer ugly edges. Since $\beta' \leftrightsquigarrow \beta$, this contradicts the minimality of $\beta$.
\end{proof}

By considering the last element of a sequence reached from an ugly edge, Lemma~\ref{lem:cycleseverywhere} yields the following statement, whose proof appeared in~\cite{casselgren} but which we state somewhat differently. 

\begin{corollary}[\cite{casselgren}]\label{cor:fan}
In a minimal colouring, both endpoints of an ugly edge have a free neighbour.
\end{corollary}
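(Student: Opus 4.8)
The plan is to read the conclusion directly off the cycle provided by Lemma~\ref{lem:cycleseverywhere}, by inspecting its last vertex. Fix a minimal colouring $\beta$ and an ugly edge $vw$, so that $vw\notin M$ and $\beta(vw)=1$; in particular $m_\beta(v)\neq 1$ and $m_\beta(w)\neq 1$, since the colour $1$ already occurs at both $v$ and $w$. First I would apply Lemma~\ref{lem:cycleseverywhere} to the endpoint $v$: the sequence $X_v(w)=(vw,vx_1,\ldots,vx_p)$ of vertices of $D_v(\beta)$ reached from $vw$ induces a cycle, and since $m_\beta(w)\neq 1$ this cycle is not a loop at $vw$, so $p\geq 1$. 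In the cycle the out-arc of the last vertex $vx_p$ returns to the source $vw$, and by the definition of the arcs of $D_v(\beta)$ this means exactly that $m_\beta(x_p)=\beta(vw)=1$. Thus $x_p$ is free, and since $x_p\in N(v)$, the vertex $v$ has a free neighbour.

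Next I would run the same argument with the two endpoints exchanged. Being \emph{ugly} is a property of the edge, not of an ordered pair of its endpoints (it only asserts $\beta(vw)=1$, which is symmetric), so Lemma~\ref{lem:cycleseverywhere} applies verbatim to $w$ as well: the sequence $X_w(v)=(wv,wy_1,\ldots,wy_r)$ of vertices of $D_w(\beta)$ reached from $wv$ induces a cycle, with $r\geq 1$, and the out-arc of its last vertex $wy_r$ back to the source $wv$ gives $m_\beta(y_r)=\beta(wv)=1$. Hence $y_r$ is a free neighbour of $w$. Combining the two halves, both endpoints of the ugly edge $vw$ have a free neighbour, which is the assertion of Corollary~\ref{cor:fan}.

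I do not expect any genuine obstacle here: the whole content sits in Lemma~\ref{lem:cycleseverywhere}, and the only step that needs care is the translation, via the definition of the digraph $D_\cdot(\beta)$, between the combinatorial statement ``the sink of the cycle has an arc back to its source $vw$'' and the colouring statement ``the missing colour at that sink equals $\beta(vw)=1$''. The remaining points --- that the cycle is non-trivial and that $x_p$ (resp.\ $y_r$) is indeed adjacent to $v$ (resp.\ $w$) --- are immediate from the definitions.
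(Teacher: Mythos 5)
Your proof is correct and is exactly the argument the paper intends: the paper only sketches this corollary in one sentence (``by considering the last element of a sequence reached from an ugly edge''), and your write-up fleshes out precisely that --- the sink of the cycle $X_v(w)$ has its out-arc back to $vw$, so its missing colour is $\beta(vw)=1$, giving a free neighbour of $v$, and symmetrically for $w$. No gaps; the observation that $p\ge 1$ is not even needed, since a self-loop at $vw$ would require $m(w)=\beta(vw)$, which is impossible.
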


As we shall see, a consequence of Corollary~\ref{cor:fan} 
together with the regularity assumption
is that,
in a minimal colouring, there are bad edges with a free endpoint (unless there is no bad edge at all). 
These are central to the argument\footnote{In~\cite{casselgren}, this allows us to assume case A happens, avoiding case B entirely.}. 
Let us now prove a small observation and then proceed with the core of the proof.

\begin{observation}\label{obs:ugly-from-bad}
In any minimal colouring $\beta$, every bad edge is incident to an ugly edge.
\end{observation}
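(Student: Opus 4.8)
The plan is a short argument by contradiction that leans on the regularity of $G$ and on the fact that $M$ is a perfect matching. Suppose $\beta \in \mathcal{C}$ is minimal and that $uv$ is a bad edge incident to no ugly edge. Since $M$ is a matching and $uv \in M$, the only edge of $M$ meeting $u$ (resp.\ $v$) is $uv$ itself, which is bad, so no \emph{good} edge is incident to $uv$ either. Every edge coloured $1$ is either good (if it lies in $M$) or ugly (if it does not), so it follows that no edge coloured $1$ is incident to $u$ or to $v$. Equivalently, $m_\beta(u) = m_\beta(v) = 1$, i.e.\ both endpoints of $uv$ are free.

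Next I would recolour $uv$ to $1$. Because colour $1$ is missing at both $u$ and $v$, the component of $K_\beta(\beta(uv),1)$ containing the edge $uv$ consists of that single edge, so swapping it is a legitimate Kempe change; call the resulting colouring $\beta'$, which therefore lies in $\mathcal{C}$. In $\beta'$ the edge $uv$ is now good, while every other edge keeps its colour, so $\beta'$ has one fewer bad edge than $\beta$ and creates no new ugly edge. This contradicts the minimality of $\beta$.

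There is no genuine obstacle here; the only point needing a moment's attention is the observation that a bad edge can never be incident to a good edge, which is immediate since $M$ is a matching. The statement is precisely the counterpositive of the elementary fact that two free endpoints of an edge of $M$ would let us "absorb" that edge into colour $1$ for free.
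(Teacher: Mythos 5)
Your proof is correct and is essentially the paper's own argument, just phrased contrapositively: the paper shows directly that if both endpoints of a bad edge were free one could swap the single-edge Kempe chain and reduce the number of bad edges, whereas you assume no incident ugly edge and deduce that both endpoints are free before applying the same swap. The two presentations are logically identical.
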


\begin{proof}
Let $xy$ be a bad edge. If $m(x)=m(y)=1$, we can swap the (single-edge) component of $K(1,\beta(xy))$ containing $xy$ and have one fewer bad edges, a contradiction to the minimality of $\beta$. We derive that $xy$ is incident to some edge $e$ satisfying $\beta(e)=1$.
Then $e$ is necessarily ugly, as $xy \in M$ and $M$ is a matching. 
\end{proof}

\begin{proof}[Proof of Theorem~\ref{th:main}]
Let $\beta$ be a minimal colouring. If there is no bad edge, then $M$ is monochromatically coloured, as desired. Therefore, we assume that there is a bad edge which, by Observation~\ref{obs:ugly-from-bad}, is incident to an ugly edge $e$.
By Corollary~\ref{cor:fan} applied to $e$, there exists some free vertex $u$ (adjacent to an endpoint of $e$). 

Let $v$ be such that $uv \in M$,
note that since $u$ free, $uv$ is bad, and is thus incident to an ugly edge by Observation~\ref{obs:ugly-from-bad}. Since $u$ cannot be incident to an ugly edge (it is free), there is some vertex $w \in N(v)$ such that $vw$ is ugly. We denote by $X_v$ the sequence of vertices of $D_v$ reached from $vw$, and by $Y_w$ the sequence of vertices of $D_w$ reached from $vw$.

By Lemma~\ref{lem:cycleseverywhere}, we obtain immediately that $X_v$ induces a cycle in $D_v$, and $Y_w$ induces a cycle in $D_w$. 
By Lemmas~\ref{lem:loop} or~\ref{lem:doubleloop}, we derive that $\beta \leftrightsquigarrow Y_w^{-1}(\beta)$. Note that $Y_w^{-1}(\beta)$ has at most as many bad and ugly edges as $\beta$. 

By triangle-freeness, $u$ and $w$ are not adjacent and so $uw$ does not appear in $Y_w$. 
Thus $m_{Y_w^{-1}(\beta)}(v)=1=m_{Y_w^{-1}(\beta)}(u)$. 
We swap the (single-edge) component of $K(1,\beta(uv))$ containing the edge $uv$, and obtain a colouring with fewer bad edges, a contradiction.
\end{proof}

\section{Conclusion}

The main result of this paper is based on the Vizing's fans; in his proof he only needs to handle the case of paths and comets (as described in Section~\ref{sec:fans}), our main contribution is to extend this argument to non saturated cycles and double cycles. The proof yields an algorithm that, given a target optimal colouring, computes a polynomial transformation sequence in polynomial-time. 
The method could be pushed further, however the length and the complexity of the proof would increase exponentially. The general case for multigraphs still seems a challenging question, and an additional argument is probably needed to solve it.

Moreover, even some special cases are of their own interest. For instance, it's still unclear how to handle the case of cliques, as an induction on the chromatic index is not possible for this class of graphs.
\paragraph*{Acknowledgements.}

We gratefully acknowledge support from Nicolas Bonichon and the Simon family for the organisation of the $5^{\textrm{th}}$ Pessac Graph Workshop, where a preliminary part of this research was done. We thank Caroline Brosse, Vincent Limouzy, Carole Muller and Lucas Pastor for extensive discussions around this topic. Last but not least, we thank Peppie for her unwavering support during the work sessions, close or from afar.

\bibliography{kempe}

\end{document}